\newtheorem{thm}{Theorem}[section]
\newtheorem{lem}[thm]{Lemma}
\newtheorem{defi}[thm]{Definition}
\newtheorem{prop}[thm]{Proposition}
\renewcommand{\AA}{\mathcal{A}}
\newcommand{\MM}{\mathcal{M}}
\newcommand{\OO}{\mathcal{O}}
\newcommand{\setN}{\mathds{N}}
\newcommand{\setP}{\mathds{P}}
\newcommand{\setR}{\mathds{R}}
\newcommand{\idop}{\mathds{1}} %Identity Operator
\renewcommand{\tr}[1]{\mathrm{Tr}\left(#1\right)}
\newcommand{\sym}{\mathrm{sym}} %Subscript for symmetric matrices
\newcommand{\M}{\mathcal{M}}
\newcommand{\mscalar}[2]{\langle #1, #2 \rangle}
\newcommand*{\fancyrefthmlabelprefix}{thm}
\newcommand*{\fancyreflemlabelprefix}{lem}
\newcommand*{\fancyrefcorlabelprefix}{cor}
\newcommand*{\fancyrefdefilabelprefix}{defi}
\begin{document}

\title{Dimensionality reduction of SDPs through sketching}

\author[1]{Andreas Bluhm\thanks{andreas.bluhm@ma.tum.de}~}
\author[1]{Daniel Stilck Fran\c{c}a\thanks{dsfranca@mytum.de}~}
\affil[1]{\small{\emph{Department of Mathematics, Technical University of Munich, 85748 Garching, Germany}}}
\maketitle

\begin{abstract}
We show how to sketch semidefinite programs (SDPs) using positive maps in order to reduce their dimension. 
More precisely, we use Johnson\hyp{}Lindenstrauss transforms to produce a smaller SDP whose solution preserves feasibility or approximates the value of the original problem with high probability. 
These techniques allow to improve both complexity and storage space requirements. They apply to problems in which the Schatten 1-norm of the matrices specifying the SDP and also of a solution to the problem is 
constant in the problem size. Furthermore, we provide some results which clarify the limitations of positive, linear sketches in this setting. 
%Finally, we discuss numerical examples to benchmark our methods.
\\
\\
\noindent \textbf{Keywords:} semidefinite programmming; sketching; dimensionality reduction; Johnson-Lindenstrauss transforms.
\\
\noindent \textbf{MSC Subjects:} \emph{Main:} 90C22 \emph{Secondary:} 15A39, 15B48, 68W20

\end{abstract}
\tableofcontents

\begin{section}{Introduction}

Semidefinite programs (SDPs) are a prominent class of optimization problems \cite{lasserre2016handbook}. They
have applications across different areas of science and mathematics, such as discrete optimization \cite{SDPdiscopt} or control theory \cite{SDPcontrol}.

However, although there are many different algorithms that solve an SDP up to an error $\epsilon$ in
a time that scales polynomially with the dimension and logarithmically with $\epsilon^{-1}$ \cite{SDPcomplexity},
solving large instances of SDPs still remains a challenge. 
This is not only due to the fact that the number and cost of
the iterations scale superquadratically with the dimension for most algorithms to solve SDPs, but also due to the fact that the memory required
 to solve large instances is beyond current capabilities. 
This has therefore motivated research on algorithms that can solve SDPs, or at least obtain an approximate solution, with less memory requirements. One example are the so called first order methods, which were developed to remedy the high memory requirements of interior point methods; see \cite{Renegar2014} and references therein.
Another such example is the recent \cite{sketchydecisions}, where ideas similar to ours were
applied to achieve optimal storage requirements necessary to solve a certain class of SDPs. While the latter work proposes a new way to solve an SDP using linear sketches,
our approach relies on standard convex optimization methods.

In this work, we develop algorithms to estimate the value of an SDP with linear inequality constraints
and to determine if a given linear matrix inequality (LMI) is feasible or not.
These algorithms convert the original problem to one of the same type, but of smaller dimension, which we call the sketched problem.
Subsequently, this new problem can be solved with the same techniques as the original one, but potentially using less memory
and achieving a smaller runtime. Therefore, we call this a black box algorithm. With high probability an optimal solution to the sketched problem
allows us to obtain a good approximation of the value or to test the feasibility of the problem.

In the case of LMIs, if the  sketched problem is infeasible, we obtain a certificate that the original problem is also infeasible.
If the sketched problem is feasible, we are able to infer that the original problem is either ``close to feasible''
or feasible with high  probability, under some technical assumptions. 

In the case of estimating the value of SDPs, we are able to give an upper bound that holds with high probability
and a lower bound on the value of the SDP from the value of the sketched problem, again under some technical assumptions.
For a certain class of SDPs, which includes the so-called semidefinite packing problems \cite{Iyengar2005},
we are able to find a feasible point of the original problem which is close to the optimal point and most technical aspects simplify significantly. 

Our algorithms work by conjugating the matrices that define the constraints of the SDP with Johnson-Lindenstrauss transforms \cite{Woodruff2014},
thereby preserving the structure of the problem. Similar ideas have been proposed to reduce the memory usage and complexity of solving linear programs \cite{2015arXiv150700990V}.
While those techniques aim to reduce the number of constraints, our goal is to reduce the dimension of the matrices involved.

Unfortunately, the dimension of the sketch needed to have a fixed error with high probability scales with the Schatten $1$-norm of both the
constraints and of an optimal solution to the SDP, which significantly restricts the class of problems to which these methods can be applied.
We are able to show that one cannot significantly improve this scaling and that one cannot sketch general SDPs using linear maps. 

This paper is organized as follows:
in \Fref{sec:preliminaries}, we fix our notation and recall some basic notions from matrix analysis, Johnson-Lindenstrauss transforms,
semidefinite programs and convex analysis which we will need throughout the paper.
We then proceed to show how to sketch the Hilbert-Schmidt scalar product with positive maps in \Fref{sec:sketchhs}.
We apply these techniques in \Fref{sec:feasibility} to show how to certify that certain LMIs are infeasible by
showing the infeasibility of an LMI of smaller dimension. 
In \Fref{sec:approxvalue}, we apply similar ideas to estimate the value of an SDP with linear inequality constraints by solving an SDP of lower dimension.
%This is followed by
We conclude with a discussion of the possible gains in the complexity of solving these problems and for the memory requirements in \Fref{sec:complexity}. 
 
\end{section}

\begin{section}{Preliminaries}\label{sec:preliminaries}
We begin by fixing our notation. For brevity, we will write the set $\{1,\ldots,d\}$ as $[d]$. The positive vectors will denoted by $\setR^m_+:=\set{x\in\setR^m:x_i\geq0}$.
The set of real $d\times D$ matrices
will be written as $\M_{d,D}$ and just $\M_d$ if $d=D$. 
We will denote by $\M_d^{sym}$ the set of symmetric $d\times d$ matrices.
For $A\in\M_d$, $A^T$ will denote the transpose of $A$.
To avoid cumbersome notation and redundant theorems, we will the statements only for real matrices. However, note that all statements
translate to the complex case in a straightforward fashion. 
For $A\in\M_d^{sym}$ we will write $A\geq0$ if $A$ is positive semidefinite. We will denote the cone of $d\times d$ positive semidefinite matrices
by $\mathcal{S}_d^+$ and its interior, the positive definite matrices, by $\mathcal{S}_d^{++}$. For the Schatten $p$-norm for $p\in[1,\infty)$ of a matrix $A\in\M_d$
we will write
\begin{equation*}
\|A\|_p := \Tr[(A^T A)^{\frac{p}{2}}]^{\frac{1}{p}}.
\end{equation*} 
The $p=\infty$ norm is the usual operator norm. The Schatten-2 norm is often called the Hilbert-Schmidt (HS) norm and is induced by the Hilbert-Schmidt scalar product, which is given by
$\langle A,B\rangle_{HS}=\tr{A^TB}$. 

A linear map $\Phi:\M_D\to\M_d$ is called positive if $\Phi(\mathcal{S}_D^+)\subseteq\mathcal{S}_d^+$.
We will mostly consider maps of the form $\Phi(X)=SXS^T$ with $S\in\M_{d,D}$.

The following families of matrices will play a crucial role for our purposes:
\begin{defi}[Johnson-Lindenstrauss transform]
A random matrix $S \in \M_{d,D}$ is a Johnson-Lindenstrauss transform (JLT) with parameters $(\epsilon,\delta,k)$ 
if with probability at least $1 - \delta$, for any $k$-element subset $V \subseteq \mathds{R}^D$, for all $v, w \in V$ it holds that
\begin{equation*}
|\mscalar{Sv}{Sw} - \mscalar{v}{w}| \leq \epsilon \norm{v}_2 \norm{w}_2.
\end{equation*} 
\end{defi}
Note that one usually only demands that the norm of the vectors involved is distorted by at most $\epsilon$ in the definition
of JLTs, but this is equivalent to the definition we chose by the polarization identity.
There are many different examples  of JLTs in the literature and we refer to \cite{Woodruff2014} and references therein for more details.
Most of the constructions of JLTs focus on real matrices, but the generalization to complex matrices is straightforward. % in \Fref{sec:complexstuff} we show how to lift some of these results to cover complex matrices.
One simple example are random matrices $S=\frac{1}{\sqrt{d}}R\in\M_{d,D}$ with $R$ having i.i.d. standard
Gaussian random variables, which can be shown to be $(\epsilon,\delta,k)$-JLT if $d=\Omega(\epsilon^{-2}\log(k\delta^{-1}))$~\cite[Lemma 2.12]{Woodruff2014}.

It will later be of advantage to our algorithm to consider JLTs with a desired sparsity $s$ and we mention the following almost optimal result. We refer to \cite[Section 1.1]{sparsejltnelson} for a proof and remark that the proof is constructive.

\begin{thm}[Sparse JLT {\cite[Section 1.1]{sparsejltnelson}}] \label{thm:bestJLT}
There is an $(\epsilon,\delta,k)$-JLT $S\in\M_{d,D}$ with $d=\mathcal{O}\left(\epsilon^{-2}\log(k\delta^{-1})\right)$
and $s=\mathcal{O}(\epsilon^{-1}\log(k\delta^{-1}))$ nonzero entries per column.
\end{thm}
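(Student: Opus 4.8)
The plan is to exhibit an explicit random matrix with the stated sparsity and to control, via a high-moment argument, how much it distorts the squared norm of a single fixed vector; a union bound over $\mathcal{O}(k^2)$ directions then yields the JLT property. For the construction, partition the $d$ output coordinates into $s$ blocks of equal size $b=d/s$ (rounding if $s$ does not divide $d$). For each column $j\in[D]$ and each block $r\in[s]$, pick a uniformly random coordinate inside block $r$ and an independent Rademacher sign, and place that sign times $1/\sqrt{s}$ in the chosen coordinate of column $j$; all remaining entries of column $j$ are zero. Every column then has exactly $s$ nonzero entries, so the sparsity claim holds once we fix $d=\Theta(\epsilon^{-2}\log(k\delta^{-1}))$ and $s=\Theta(\epsilon^{-1}\log(k\delta^{-1}))$.

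By the polarization identity (as in the remark following the definition of a JLT) it suffices to guarantee, simultaneously for every unit vector $u$ in the set $W$ consisting of the normalized elements of $V$ together with the $\binom{k}{2}$ normalized pairwise differences, that $\big|\,\norm{Su}_2^2-1\,\big|\le\epsilon'$ for a suitable $\epsilon'=\Theta(\epsilon)$; here $|W|=\mathcal{O}(k^2)$. Fix such a $u$ and set $Q:=\norm{Su}_2^2$. Expanding $Q$ as a double sum over blocks and coordinates and taking expectations over the independent signs and the hash functions (using that two distinct coordinates land in the same bucket of a given block with probability $1/b$) gives $\setE[Q]=\norm{u}_2^2=1$, the off-diagonal ``collision'' terms being exactly what must be controlled. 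The core of the proof is the moment estimate
\begin{equation*}
\setE\big[(Q-1)^{\ell}\big]\;\le\;\Big(\frac{C\ell}{d}\Big)^{\ell/2}+\Big(\frac{C\ell}{s}\Big)^{\ell}
\end{equation*}
for every even $\ell$, with $C$ an absolute constant; up to the precise constants this is the content of the analysis in \cite{sparsejltnelson}. One proves it by expanding $(Q-1)^{\ell}$ into a sum over $\ell$-tuples of coordinate pairs, taking the expectation over the signs (which annihilates every term in which some coordinate occurs an odd number of times) and then over the hash functions, and counting the surviving ``collision multigraphs'' on the involved coordinates organized by their number of vertices and connected components; the first term in the bound is the sub-Gaussian-type contribution of a Frobenius norm of order $1/d$, the second the sub-exponential-type contribution of an operator norm of order $1/s$.

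It remains to apply Markov's inequality to the nonnegative variable $(Q-1)^{\ell}$ with $\ell$ an even integer of order $\log(|W|\delta^{-1})=\Theta(\log(k\delta^{-1}))$: the moment bound together with the stated choices of $d$ and $s$ gives $\setP[\,|Q-1|>\epsilon'\,]\le\delta/|W|$, and a union bound over the $\mathcal{O}(k^2)$ directions in $W$ yields the claim with overall failure probability at most $\delta$. The one genuinely delicate step is the combinatorial moment bound: the expansion must be organized so that the sign cancellations and the bucket-collision probabilities are exploited \emph{together}, and the graph-counting has to be carried out tightly enough to produce precisely the $\ell/d$ and $\ell/s$ dependence — a lossy count would inflate $d$ or $s$ and break the stated scaling. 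Everything else — the construction, the polarization reduction, the first-moment computation, and the closing Markov-plus-union-bound step — is routine. (As an alternative to the bare-hands moment computation one can invoke a sufficiently quantitative Hanson--Wright inequality for the quadratic form $Q-1$ in the Rademacher variables, conditionally on the hash functions, but extracting the optimal sparsity $s$ still requires essentially the same care.)
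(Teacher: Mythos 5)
The paper offers no proof of this theorem---it is quoted verbatim from \cite{sparsejltnelson}, Section 1.1---and your sketch is precisely the argument of that source: the block construction with $s$ buckets per column, the polarization reduction to norm preservation on a set of $\mathcal{O}(k^2)$ normalized directions, and the even-moment bound $\setE[(Q-1)^{\ell}]\leq (C\ell/d)^{\ell/2}+(C\ell/s)^{\ell}$ followed by Markov's inequality and a union bound, which together force exactly $d=\mathcal{O}(\epsilon^{-2}\log(k\delta^{-1}))$ and $s=\mathcal{O}(\epsilon^{-1}\log(k\delta^{-1}))$. Your outline is correct and matches the cited proof; the only substantive step you leave unexpanded is the combinatorial graph-counting behind the moment estimate, which is the same step the paper itself delegates to the citation.
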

Given some JLT $S\in\M_{d,D}$, the positive map $\Phi:\M_D\to\M_d$, $X\mapsto SXS^T$ will be called
the sketching map and $d$ the sketching dimension.

We will now fix our notation for semidefinite programs.
Semidefinite programs are a class
of optimization problems in which a linear functional
is optimized under linear constraints over the set of positive semidefinite matrices. We refer to \cite{lasserre2016handbook} for an introduction
to the topic. There are many equivalent ways of formulating SDPs. In this work, we will assume w.l.o.g. that the SDPs are given in 
the following form:

\begin{defi}[Sketchable SDP] \label{defi:sketchablesdp}
Let $A, B_1, \ldots, B_m \in \MM_{D}^{\sym}$ and $\gamma_1,\ldots,\gamma_m\in\setR$. We will call the constrained optimization 
problem 
\begin{align}\label{sketchablesdp} 
\mathrm{maximize\qquad} & \tr{A X} \nonumber\\
\mathrm{subject~to\qquad} & \tr{B_i X} \leq \gamma_i, \qquad i \in [m]\\
					& X \geq 0 \nonumber,
\end{align}
a sketchable SDP.
\end{defi}
Sometimes we will also refer to a sketchable SDP as the original problem.
We will see later how to approximate the value of these SDPs.
SDPs have a rich duality theory~\cite{lasserre2016handbook}. The dual problem of a sketchable SDP is given by the following:
\begin{align}\label{dualsketchablesdp}
\mathrm{minimize\qquad} & \langle c,\gamma\rangle \nonumber\\
\mathrm{subject~to\qquad} & \sum_{i=1}^m c_iB_i-A\geq 0\\
& c \in \setR^m_+ \nonumber,
\end{align}
where $\gamma\in\setR^m$ is the vector with coefficients $\gamma_i$. 
SDPs and LMIs will be called feasible if there is at least one point satisfying all the constraints, otherwise we will call them
infeasible. A sketchable SDP will be called strictly feasible if there is a point $X > 0$ such that all the constraints
in \eqref{sketchablesdp} are satisfied with strict inequality.
Under some conditions, such as Slater's condition \cite{lasserre2016handbook}, the primal problem \eqref{sketchablesdp} and the dual problem \eqref{dualsketchablesdp} have the same value. This is called strong duality.

We will need some standard concepts from convex analysis. Given $a_1,\ldots,a_n\in V$ for a vector space $V$, we denote by $\text{conv}\{a_1,\ldots,a_n\}$ the convex hull of the points. 
By $\text{cone}\{a_1,\ldots,a_n\}$ we will denote
the cone generated by these elements and a convex cone $C$ will be called pointed if $C\cap-C=\{0\}$.

\end{section}

\section{Sketching the Hilbert-Schmidt product with positive maps}\label{sec:sketchhs}

One of our main ingredients to sketch an SDP or LMI will be a random positive map $\Phi:\M_D\to\M_d$
that preserves the Hilbert-Schmidt scalar product with high probability. We demand positivity to assure that
the structure of the SDP or LMI is preserved.
Below, we first consider the example $\Phi(X)=SXS^T$ with $S$ a JLT. A similar estimate was proved in \cite{Stark2016}
 for a different application.  
\begin{lem}\label{lem:sketchhs}
Let $B_1,\ldots,B_m\in \M_D^\sym$ and $S\in\M_{d,D}$ be an $(\epsilon,\delta,k)$-JLT with $\epsilon\leq1$ and $k$ such that
\[
k\geq\sum\limits_{i=1}^{m}\rank{B_i}. 
\]
Then
\begin{align}\label{equ:boundhs}
\setP\left[\forall i,j\in[m]:|\tr{SB_iS^TSB_jS^T}-\tr{B_iB_j}|\leq3\epsilon\norm{B_i}_1\norm{B_j}_1\right]\geq1-\delta. 
\end{align}

\end{lem}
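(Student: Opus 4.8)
The plan is to reduce the estimate to the behaviour of $S$ on the eigenvectors of the $B_i$, where the defining property of a JLT applies directly. First I would diagonalise each constraint matrix: since $B_i\in\M_D^\sym$, write $B_i=\sum_{l=1}^{r_i}\lambda_{i,l}v_{i,l}v_{i,l}^T$ with $r_i=\rank{B_i}$, where the $\lambda_{i,l}$ are the nonzero eigenvalues of $B_i$ and the $v_{i,l}$ are corresponding unit eigenvectors; recall that then $\norm{B_i}_1=\sum_{l=1}^{r_i}\abs{\lambda_{i,l}}$. Collecting all these eigenvectors into the set $V:=\set{v_{i,l}:i\in[m],l\in[r_i]}$ produces a set of at most $\sum_{i=1}^m r_i\leq k$ unit vectors, which we may pad with arbitrary unit vectors so that it has exactly $k$ elements. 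By the JLT property, with probability at least $1-\delta$ the bound $\abs{\mscalar{Sv_{i,l}}{Sv_{j,p}}-\mscalar{v_{i,l}}{v_{j,p}}}\leq\epsilon$ holds for all vectors in $V$ simultaneously, and I would work on this event for the remainder of the argument.

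Expanding in the eigenbases yields $\tr{B_iB_j}=\sum_{l=1}^{r_i}\sum_{p=1}^{r_j}\lambda_{i,l}\lambda_{j,p}\mscalar{v_{i,l}}{v_{j,p}}^2$, and, using $SB_iS^T=\sum_{l=1}^{r_i}\lambda_{i,l}(Sv_{i,l})(Sv_{i,l})^T$, likewise $\tr{SB_iS^TSB_jS^T}=\sum_{l=1}^{r_i}\sum_{p=1}^{r_j}\lambda_{i,l}\lambda_{j,p}\mscalar{Sv_{i,l}}{Sv_{j,p}}^2$. Hence the quantity to be bounded in \eqref{equ:boundhs} equals $\sum_{l,p}\lambda_{i,l}\lambda_{j,p}\bigl(\mscalar{Sv_{i,l}}{Sv_{j,p}}^2-\mscalar{v_{i,l}}{v_{j,p}}^2\bigr)$. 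Abbreviating $a:=\mscalar{v_{i,l}}{v_{j,p}}$ and $b:=\mscalar{Sv_{i,l}}{Sv_{j,p}}$, the Cauchy--Schwarz inequality gives $\abs{a}\leq1$, while on our event $\abs{b-a}\leq\epsilon$; consequently
\[
\abs{b^2-a^2}=\abs{b-a}\,\abs{b+a}\leq\epsilon\bigl(2\abs{a}+\epsilon\bigr)\leq\epsilon(2+\epsilon)\leq3\epsilon,
\]
where the hypothesis $\epsilon\leq1$ is used in the last step. The triangle inequality applied to the above expansion then gives
\[
\abs{\tr{SB_iS^TSB_jS^T}-\tr{B_iB_j}}\leq3\epsilon\sum_{l=1}^{r_i}\sum_{p=1}^{r_j}\abs{\lambda_{i,l}}\abs{\lambda_{j,p}}=3\epsilon\norm{B_i}_1\norm{B_j}_1,
\]
which holds for all $i,j\in[m]$ on an event of probability at least $1-\delta$, as claimed.

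I do not expect any real obstacle. The only points that need care are the bookkeeping — that only the nonzero eigenvalues contribute, so that the relevant count is $\sum_i\rank{B_i}$ rather than $mD$, and that a single JLT event of probability at least $1-\delta$ controls all the pairs $v_{i,l},v_{j,p}$ at once, so that no additional union bound over $i,j$ is required — together with the elementary inequality $\abs{b^2-a^2}\leq3\epsilon$, which is precisely where $\epsilon\leq1$ enters and which produces the constant $3$ in the statement.
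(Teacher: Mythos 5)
Your proof is correct and follows essentially the same route as the paper: diagonalise the $B_i$, apply the JLT guarantee to the at most $k$ eigenvectors, and bound the difference of squared inner products by $3\epsilon$ before summing against $\abs{\lambda_{i,l}}\abs{\lambda_{j,p}}$. The only (immaterial) difference is how the constant $3$ is obtained — you use $\abs{b+a}\leq 2\abs{a}+\abs{b-a}\leq 2+\epsilon$, whereas the paper bounds $\abs{b}+\abs{a}\leq 3$ via $\norm{Sv}_2\leq\sqrt{1+\epsilon}$ and Cauchy--Schwarz on the sketched vectors.
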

\begin{proof}
Observe that the eigenvectors of the $B_i$ corresponding to nonzero eigenvalues of the $B_i$ form a subset of cardinality at most $k$ of $\mathds{R}^D$.
Let $A,B\in\{B_1,\ldots,B_m\}$.
As $S$ is an $(\epsilon,\delta,k)$-JLT, with probability at least $1-\delta$ we have for all normalized eigenvectors $a_i$ of 
$A$ and $b_j$ of $B$ that 
\begin{equation}\label{eq:boundjltscalar}
\big\lvert|\mscalar{Sa_i}{Sb_j}| - |\mscalar{a_i}{b_j}|\big\rvert \leq \epsilon
\end{equation}
by the reverse triangle inequality. We also have that for any $a_i$, $b_j$ 
\begin{equation*}
\norm{Sa_i}_2\leq\sqrt{1+\epsilon}, 
\quad  \norm{Sb_j}_2\leq\sqrt{1+\epsilon}, 
\end{equation*}
again by the fact that $S$ is a JLT.
As $\epsilon\leq1$ and by the Cauchy-Schwarz inequality, it follows that
\begin{equation}\label{eq:boundsumscalar}
|\mscalar{Sa_i}{Sb_j}| + |\mscalar{a_i}{b_j}|\leq3
\end{equation}
and hence, by multiplying \eqref{eq:boundsumscalar} with \eqref{eq:boundjltscalar},
\begin{equation}\label{eq:boundscalarsquared}
\big\lvert|\mscalar{Sa_i}{Sb_j}|^2-|\mscalar{a_i}{b_j}|^2\big\rvert\leq3\epsilon. 
\end{equation}
Now let $\lambda_i$ and $\mu_j$ be the eigenvalues of $A$ and
$B$, respectively. We have:
\begin{align*}
\big\lvert\tr{SAS^TSBS^T}-\tr{AB}\big\rvert&=\left|\sum_{i,j=1}^D\lambda_i\mu_j(|\mscalar{Sa_i}{Sb_j}|^2-|\mscalar{a_i}{b_j}|^2)\right|\\
&\leq3\epsilon\sum_{i,j=1}^D|\lambda_i||\mu_j|=3\epsilon\norm{A}_1\norm{B}_1
\end{align*}
with probability at least $1-\delta$.
As $A,B$ were arbitrary, the claim follows.
\end{proof}

The scaling of the error with the Schatten $1$-norm of the matrices involved in Lemma \ref{lem:sketchhs} is
highly undesirable, as the norm might grow linearly with the dimension. Applying JLTs for the Hilbert space $\M_D^{\text{sym}}$ would give a scaling of the error with the Schatten $2$-norm, but it would not necessarily preserve positivity of the matrices. 
The next theorem
shows that a scaling of the error with the Schatten $2$-norm of the matrices involved is not possible with positive maps if we want to achieve a non-trivial compression. Therefore, we cannot hope for a much better error dependence even with more advanced tools than the crude estimates which we have used.

\begin{thm}\label{thm:nogofrobeniussketch}
Let $\Phi:\M_D\to\M_d$ be a random positive map such that with strictly positive probability for any
$Y_1,\ldots\,Y_{D+1}\in\M_D$ and $0<\epsilon<\frac{1}{4}$ we have
\begin{equation}\label{eq:boundhs2}
|\tr{\Phi(Y_i)^T\Phi(Y_j)}-\tr{Y_i^T Y_j}|\leq \epsilon\|Y_i\|_2\|Y_j\|_2.
\end{equation}
Then $d=\Omega(D)$.
\end{thm}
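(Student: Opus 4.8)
The plan is to show that if $\Phi$ approximately preserves all Hilbert-Schmidt inner products among $D+1$ suitably chosen matrices (with Frobenius-norm error $\epsilon < 1/4$), then the images of those matrices must be linearly independent in $\M_d$, forcing $d^2 \geq D+1$ — which is too weak — so instead I want to exploit \emph{positivity} to get a linear, rather than quadratic, lower bound. The key realization is that positive maps are severely constrained on the positive cone: a positive map $\Phi$ sends rank-one projections to positive semidefinite matrices, and if $\Phi$ also nearly preserves Frobenius norms and orthogonality, then it must send a family of mutually orthogonal rank-one projectors to a family of "almost orthogonal" positive semidefinite matrices of roughly unit Frobenius norm. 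The obstruction is that positive semidefinite matrices of bounded Frobenius norm cannot be too close to mutually orthogonal in large numbers, because their traces are nonnegative and controlled.

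Concretely, first I would fix an orthonormal basis $\{e_1,\dots,e_D\}$ of $\setR^D$ and set $P_i = e_i e_i^T$ for $i \in [D]$, together with $Y_{D+1} = \idop_D/\sqrt{D}$ (or some other convenient choice), and condition on the event that \eqref{eq:boundhs2} holds for these $D+1$ matrices — which has strictly positive probability by hypothesis, so such a realization $\Phi$ exists. Then $\|Y_i\|_2 = 1$ for $i \in [D]$, and \eqref{eq:boundhs2} gives $\tr{\Phi(P_i)^T\Phi(P_i)} \in [1-\epsilon, 1+\epsilon]$ and $|\tr{\Phi(P_i)^T\Phi(P_j)}| \leq \epsilon$ for $i \neq j$. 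Since $\Phi$ is positive and each $P_i \geq 0$, each $Q_i := \Phi(P_i)$ is positive semidefinite; note also $\tr{Q_i^T Q_j} = \tr{Q_i Q_j} \geq 0$ for PSD matrices, so in fact $0 \leq \tr{Q_i Q_j} \leq \epsilon$ for $i \neq j$. The second step is to extract the trace (i.e.\ Schatten-1 norm) information: I would test \eqref{eq:boundhs2} with the pair $(P_i, Y_{D+1})$. Since $\tr{P_i \idop_D/\sqrt D} = 1/\sqrt D$ and $\|Y_{D+1}\|_2 = 1$, this yields $\tr{Q_i^T \Phi(\idop_D/\sqrt D)} \in [1/\sqrt D - \epsilon,\, 1/\sqrt D + \epsilon]$; I want to leverage $\sum_i P_i = \idop_D$ and linearity of $\Phi$ so that $\sum_i Q_i = \Phi(\idop_D) = \sqrt D\,\Phi(\idop_D/\sqrt D) =: \sqrt D\, R$, hence $\sum_i \tr{Q_i R} \in [\sqrt D(1/\sqrt D - \epsilon), \sqrt D(1/\sqrt D + \epsilon)] = [1 - \sqrt D \epsilon, 1 + \sqrt D\epsilon]$, while also $\tr{(\sum_i Q_i)(\sum_i Q_i)} = D \tr{R^2} = D\,\tr{\Phi(Y_{D+1})^T\Phi(Y_{D+1})} \in [D(1-\epsilon), D(1+\epsilon)]$.

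The third step turns this into the dimension bound. On one hand, $\tr{\big(\sum_i Q_i\big)^2} = \sum_i \tr{Q_i^2} + \sum_{i\neq j}\tr{Q_i Q_j} \leq D(1+\epsilon) + D(D-1)\epsilon$; comparing with the lower bound $\tr{(\sum_i Q_i)^2} \geq D(1-\epsilon)$ is not yet a contradiction. The cleaner route is to bound $D = \tr{\sum_i P_i \cdot \idop} $ — no: I will instead use that each $Q_i \geq 0$, so $\sum_i Q_i \geq 0$, and estimate its rank. Write $\mathrm{Tr}(\sum_i Q_i) =: \tau$. Then by Cauchy-Schwarz on PSD matrices, $\tau^2 \leq \mathrm{rank}(\sum_i Q_i)\cdot \tr{(\sum_i Q_i)^2} \leq d \cdot D(1+\epsilon + (D-1)\epsilon)$. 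It remains to lower-bound $\tau$; here I would use $\tau = \tr{\sum_i Q_i} = \sqrt D\,\tr{R}$ together with $\tr{R^2} \in [1-\epsilon,1+\epsilon]$ and, crucially, a lower bound on $\tr{R}$ coming from $\tr{Q_i R}\approx 1/\sqrt D$ for each $i$: since $0 \leq \tr{Q_i R} \leq \|Q_i\|_1 \|R\|_\infty$ and $R\geq 0$ so $\|R\|_\infty = \tr{R} - (\text{rest}) \le \tr R$, summing gives $1 - \sqrt D\epsilon \le \sum_i \tr{Q_i R} \le \tr{R}\sum_i \tr{Q_i}$... The bookkeeping needs care, and \textbf{the main obstacle is precisely this step}: squeezing out of the (weak, Frobenius-scale) near-isometry on only $D+1$ test matrices a genuine \emph{linear} lower bound $d = \Omega(D)$, rather than the trivial $d^2 \gtrsim D$. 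The idea I expect to work is that $\sum_i Q_i$ is a PSD matrix with $\tr{(\sum_i Q_i)^2}$ of order $D^2\epsilon$ (dominated by cross terms when $\epsilon D \gg 1$) but with trace $\tau$ of order $\sqrt D \cdot \sqrt D = D$ (since $\tr R$ is forced to be of order $\sqrt D$ by the $D$ near-orthogonal unit-ish pieces it dominates), and then $d \geq \tau^2/\tr{(\sum_i Q_i)^2} \gtrsim D^2/(D^2\epsilon) = \epsilon^{-1} $ — which would only give $d = \Omega(\epsilon^{-1})$, independent of $D$. So the correct choice of test matrices must be more clever: rather than rank-one projectors I would likely take $Y_i$ to be rescaled projectors onto \emph{random} or \emph{generic} subspaces, or use the $D+1$ matrices $\{\idop - \tfrac{D}{D-1}P_i\}$ which are traceless and mutually orthogonal up to lower order, so that the Schatten-1 norms stay $O(\sqrt D)$ while forcing $d$ images in $\M_d$ to be near-orthogonal \emph{traceless} symmetric matrices — and traceless symmetric matrices have no positivity obstruction, so one falls back on $\dim \M_d^{\sym} = \binom{d+1}{2} \geq$ the number of near-orthogonal vectors, giving $d^2 \gtrsim D$ again. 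Resolving which normalization actually yields $\Omega(D)$ and not merely $\Omega(\sqrt D)$ is the crux, and I would settle it by choosing the $Y_i$ so that positivity of $\Phi$ genuinely bites: e.g.\ $P_i \geq 0$ with $\tr{P_i}=1$, $\|P_i\|_2 = 1$ arranged so that $\Phi(P_i) \geq 0$, $\tr{\Phi(P_i)}$ is pinned near a constant (via an extra test matrix $\propto \idop$), and $\tr{\Phi(P_i)\Phi(P_j)} \le \epsilon$, whence the normalized $\widehat Q_i = Q_i/\tr{Q_i}$ are density matrices on $\setR^d$ that are pairwise nearly orthogonal in Frobenius norm — and by a standard argument at most $O(d)$ density matrices can be $\Theta(1)$-near-orthogonal (since $\|\sum_i \widehat Q_i\|_\infty \le d$ as it is PSD with trace... no, $\le$ number of terms is false) — the correct packing statement being that pairwise near-orthogonal unit-Frobenius PSD matrices, being genuine probability-like operators, live effectively in an $O(d)$-dimensional cone, giving the bound. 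I expect the author's proof to pick exactly such a test family and invoke a clean "near-orthogonal PSD matrices are few" lemma; my plan is to reconstruct that lemma and that family.
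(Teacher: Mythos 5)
Your setup is exactly the paper's: take $X_i=e_ie_i^T$ together with the identity as the $(D+1)$-st test matrix, use positivity to conclude each $Q_i=\Phi(X_i)$ is positive semidefinite with $\tr{Q_i^2}\in[1-\epsilon,1+\epsilon]$ and $\tr{Q_iQ_j}\in[0,\epsilon]$ for $i\neq j$. But you stop at the crux and, as you yourself note, the routes you sketch either collapse to $d=\Omega(\epsilon^{-1})$ or to $d^2\gtrsim D$. The gap is two missing ingredients. First, an averaging step: from $\sum_iX_i=\idop$ and the hypothesis applied to the pair $(\idop,\idop)$ one gets $\sum_{i,j}\tr{Q_iQ_j}=\tr{\Phi(\idop)^2}\leq(1+\epsilon)D$, and since the diagonal already contributes at least $(1-\epsilon)D$, the \emph{total} off-diagonal mass is at most $2\epsilon D$. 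By Markov's inequality this upgrades the pairwise bound from $\epsilon$ to $1/D$ on all but a $2\epsilon$-fraction of the off-diagonal pairs, yielding a subset $I\subseteq[D]$ of size $|I|\geq\sqrt{1/2-2\epsilon}\,D$ on which $\tr{Q_iQ_j}\leq 1/D$ pairwise. Without this upgrade, the bound you would feed into any packing lemma is too weak by a factor of $\epsilon D$, which is precisely why your computation bottoms out at $d=\Omega(\epsilon^{-1})$.

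Second, the quantitative packing lemma you hoped to "reconstruct" is \cite[Proposition 2.7]{Wolf2012}: for any $|I|\geq d$ positive semidefinite matrices $P_i\in\M_d$ normalized to $\tr{P_i^2}=1$,
\begin{equation*}
\sum_{i\neq j}\tr{P_iP_j}^2\geq\frac{(|I|-d)^2|I|}{(|I|-1)d^2}.
\end{equation*}
Applied to the (rescaled) $Q_i$ with $i\in I$, the left-hand side is at most $|J|/((1-\epsilon)^2D^2)\leq(1-\epsilon)^{-2}$ by the $1/D$ bound on the selected pairs, while the right-hand side is roughly $(|I|/d-1)^2$; combining gives $d\geq\frac{(1-\epsilon)\sqrt{1/2-2\epsilon}}{2-\epsilon}D$. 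So your intuition about which test family to use and where positivity "bites" is correct, but the proposal as written does not contain the argument that makes the bound linear in $D$; both the Markov-type selection of a large well-separated subfamily and the cited inequality for positive semidefinite matrices are essential and absent.
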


\begin{proof}
Let $\{e_i\}_{1\leq i\leq D}$ be an orthonormal basis of $\mathds{R}^D$ and define $X_i=e_ie_i^T$.
As \Fref{eq:boundhs2} is satisfied with positive probability, there must exist a positive map
$\Phi:\M_D\to\M_d$ such that \Fref{eq:boundhs2} is satisfied for $Y_i=X_i$, $i\in[D]$, and 
$Y_{D+1}=\idop$.
 As the $X_i$ are orthonormal w.r.t.\ the Hilbert Schmidt scalar product and $\Phi$ is positive we have for $i,j\in[D]$
\begin{equation}\label{eq:scalarxi}
\tr{\Phi(X_i)\Phi(X_j)}\in
\begin{cases}
[0,\epsilon],\quad\text{for }i\not=j\\
[1-\epsilon,1+\epsilon], \quad\text{for }i=j.
\end{cases}
\end{equation}
Define the matrix $A\in\M_{D}$ with $(A)_{ij}=\tr{\Phi(X_i)\Phi(X_j)}$ for $i,j\in[D]$.
It is clear that $A$ is symmetric and that its entries are positive.
We have
\begin{equation*}
 \sum\limits_{i,j\in[D]}A_{ij}=\tr{\Phi(\idop)\Phi(\idop)}\in
\left[(1-\epsilon)D,(1+\epsilon)D\right].
\end{equation*}
As $A_{ii}\geq(1-\epsilon)$, it follows that 
\begin{equation}\label{eq:sumall}
\sum\limits_{i\not=j}A_{ij}\leq2\epsilon D. 
\end{equation}
Let 
\begin{equation*}
 J=\set{(i,j)\in[D]\times[D]|i\not=j,A_{ij}\leq \frac{1}{D}}.
\end{equation*}
It follows from \Fref{eq:sumall} that $|\Set{(i,j) \in [D] \times [D]| i \neq j, (i,j) \notin J}|\leq 2D^2\epsilon$ and so 
\begin{equation*}%\label{equ:sizej}
|J|\geq \left((1-2\epsilon)D^2-D\right).
\end{equation*}
Since for $(i, j)\in J$ also $(j,i) \in J$, we can write $J = (I \times I)\backslash\{(i,i)|i\in I\}$ for $I \subseteq [D]$. Thus,
\begin{equation*}
|J|=|I|(|I|-1)\geq((1-2\epsilon)D^2-D)\geq \left(\frac{1}{2}-2\epsilon\right)D^2.
\end{equation*}
for $D\geq2$. From this it follows that
\begin{equation*}
|I|^2\geq|I|(|I|-1)\geq\left(\frac{1}{2}-2\epsilon\right)D^2, 
\end{equation*}
and we finally obtain 
\begin{equation}\label{eq:lowerboundi}
|I|\geq \sqrt{1/2-2\epsilon}D. 
\end{equation}

Notice that it follows from \Fref{eq:scalarxi} that we may rescale all the $X_i$ to $X'_i$
such that $\tr{\Phi(X'_i)^2}=1$ and the pairwise scalar product still satisfies $\tr{\Phi(X'_i)\Phi(X'_j)}\leq\frac{1}{D(1-\epsilon)}$ for $(i,j)\in J$.
If there is an $N \in \setN$ such that $d>\sqrt{1/2-2\epsilon}D$ for all $D \geq N$, the claim follows. 
We therefore now suppose that $d\leq \sqrt{1/2-2\epsilon}D $. Hence, $d \leq |I|$ by \Fref{eq:lowerboundi}.
By the positivity of $\Phi$ and the fact that the $X'_i$ are positive semidefinite, we have that 
$\Phi(X_i')$ is positive semidefinite.
In \cite[Proposition 2.7]{Wolf2012} it is shown that for any set $\{P_i\}_{i \in I}$ of $|I|\geq d$ positive semidefinite matrices
in $\M_d$ such that $\tr{P_i^2}=1$ we have that
\begin{equation*}%\label{equ:boundpovm}
 \sum_{i\not=j}\tr{P_iP_j}^2\geq \frac{(|I|-d)^2|I|}{(|I|-1)d^2}.
\end{equation*}
By the definition of the set $J$, we have that
\begin{equation*}
\sum_{(i,j)\in J}\tr{\Phi(X_i')\Phi(X_j')}^2\leq \frac{|J|}{(1-\epsilon)^2D^2}\leq\frac{1}{(1-\epsilon)^2}, 
\end{equation*}
as $|J|\leq D^2$.
From \Fref{eq:lowerboundi} it follows that
\begin{equation*}
\frac{1}{(1-\epsilon)^2}\geq\left(\frac{\sqrt{1/2-2\epsilon}D}{d}-1\right)^2 
\end{equation*}
and after some elementary computations we finally obtain
\begin{equation*}
d\geq\frac{(1-\epsilon)\sqrt{1/2-2\epsilon}}{2-\epsilon}D. 
\end{equation*}
\end{proof}

It remains open if one could achieve a better compression for a sublinear number of matrices.
We also note that other theorems that restrict the possibility of dimensionality reduction using positive maps were proved in~\cite{harrowdimred},
although their results are restricted to maps that are in addition trace preserving and they also  demand that the distribution of maps is highly symmetric.

\section{Sketching linear matrix inequality feasibility problems}\label{sec:feasibility}
In this section we will show how to use JLTs to certify that certain
linear matrix inequalities (LMI) are infeasible by showing that an LMI of smaller dimension is infeasible. 
 The following lemma is similar in spirit to the well-known Farkas' lemma.
\begin{lem}\label{lem:strictlysephyper}
Let $A,B_1,\ldots,B_m\in\M_D^{\text{sym}}\backslash\set{0}$ such that 
\begin{equation}\label{ineq:matrixineqinfeasible1}
\sum_{i=1}^m c_iB_i-A\not\geq0   
\end{equation}
for all $c\in\setR^m_+$. Suppose further that 
\begin{equation*}
\Lambda=\mathrm{cone}\{B_1,\ldots,B_m\}
\end{equation*}
is pointed and $\Lambda\cap S_D^+=\{0\}$. Then there exists a  $\rho\in\mathcal{S}_D^+$ such that for all $i\in[m]$
\begin{align}\label{eq:proprho}
\tr{\rho B_i}<0,\quad\tr{-A\rho}<0\quad\text{and}\quad\tr{\rho}=1.
\end{align}

\end{lem}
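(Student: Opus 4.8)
The plan is to construct $\rho$ through a separating-hyperplane argument. Note first that \eqref{ineq:matrixineqinfeasible1} is exactly the statement $A\notin\Lambda-\mathcal{S}_D^+$. A direct separation of $A$ from this convex cone would only yield some $\rho\ge 0$ with $\tr{\rho B_i}\le 0$ for all $i$ and $\tr{\rho A}>0$, i.e. the weakened form of \eqref{eq:proprho} with $\le$ in place of $<$. The entire difficulty lies in making these inequalities strict, and to do this I would enlarge the cone to include $-A$ and exploit the two remaining hypotheses, pointedness of $\Lambda$ and $\Lambda\cap\mathcal{S}_D^+=\{0\}$.

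Concretely, put $\Lambda':=\mathrm{cone}\{B_1,\ldots,B_m,-A\}$, which is finitely generated, hence a closed polyhedral cone. The first step is to check that $\Lambda'$ is pointed. Applying \eqref{ineq:matrixineqinfeasible1} with a $c\ge 0$ realizing $\sum_i c_iB_i=A$ would give $\sum_i c_iB_i-A=0\ge 0$, a contradiction, so $A\notin\Lambda$. If now $w$ and $-w$ both lie in $\Lambda'$, write each as an element of $\Lambda$ plus a nonnegative multiple of $-A$ and add: this forces $(s+s')A\in\Lambda$ for the two coefficients $s,s'\ge 0$, hence $s=s'=0$, so $w,-w\in\Lambda$ and $w=0$ since $\Lambda$ is pointed. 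Because $\Lambda'$ is a pointed polyhedral cone, its polar $\Lambda'^\circ=\{Y\in\M_D^{\sym}:\tr{YX}\le 0\ \text{for all }X\in\Lambda'\}$ has nonempty interior, and since each generator of $\Lambda'$ is nonzero this interior is precisely
\[
U:=\{Y\in\M_D^{\sym}:\tr{YB_i}<0\ \text{for all }i\in[m],\ \tr{YA}>0\}\neq\emptyset .
\]
Thus $U$ is a nonempty open convex cone consisting of exactly the matrices satisfying the strict trace conditions of \eqref{eq:proprho}; it remains to find a point of $U$ lying in $\mathcal{S}_D^+$.

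For that I would first record the auxiliary identity $\Lambda'\cap\mathcal{S}_D^+=\{0\}$: if $Z=\sum_i c_iB_i-tA\ge 0$ with $c_i,t\ge 0$ and $t>0$, then $Z/t$ contradicts \eqref{ineq:matrixineqinfeasible1}, so $t=0$ and $Z\in\Lambda\cap\mathcal{S}_D^+=\{0\}$. Now suppose, for contradiction, $U\cap\mathcal{S}_D^+=\emptyset$. Since $U$ is open and convex and $\mathcal{S}_D^+$ is closed and convex, a separating hyperplane gives a nonzero $Z\in\M_D^{\sym}$ with $\tr{ZP}\ge 0$ for all $P\in\mathcal{S}_D^+$ and $\tr{ZY}\le 0$ for all $Y\in U$ (both sets being cones, one may take the separating value to be $0$). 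The first condition gives $Z\in\mathcal{S}_D^+$ by self-duality of the semidefinite cone, and the second, since $\overline{U}=\Lambda'^\circ$, gives $Z\in(\Lambda'^\circ)^\circ=\Lambda'$ by the bipolar theorem (valid because $\Lambda'$ is closed). Hence $Z\in\Lambda'\cap\mathcal{S}_D^+=\{0\}$, contradicting $Z\neq 0$. So there is $\rho_0\in U\cap\mathcal{S}_D^+$; it is nonzero because $0\notin U$, and then $\rho:=\rho_0/\tr{\rho_0}$ is positive semidefinite, has $\tr{\rho}=1$, and by homogeneity of the conditions defining $U$ satisfies $\tr{\rho B_i}<0$ and $\tr{-A\rho}<0$, which is \eqref{eq:proprho}.

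The main obstacle is exactly this passage from the non-strict to the strict inequalities: a one-line separation of $A$ from $\Lambda-\mathcal{S}_D^+$ does not suffice, which forces the detour through the enlarged cone $\Lambda'$, the interior of its polar, and the identity $\Lambda'\cap\mathcal{S}_D^+=\{0\}$. The other point requiring care is the final separation: one must use that $U$ is open and that $U$ and $\mathcal{S}_D^+$ are both cones so that the separating hyperplane may be chosen through the origin, and that $\Lambda'$ is polyhedral (hence closed) so that $(\Lambda'^\circ)^\circ=\Lambda'$ applies.
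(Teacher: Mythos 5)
Your proof is correct, but it takes a genuinely different route from the paper's. The paper works with the \emph{compact} convex set $E=\mathrm{conv}\{-A,B_1,\ldots,B_m\}$: it shows $E\cap\mathcal{S}_D^+=\emptyset$ (infeasibility handles the case where the $-A$-coefficient is positive, $\Lambda\cap\mathcal{S}_D^+=\{0\}$ the case where it vanishes, and pointedness rules out $0\in E$) and then invokes strict separation of a compact convex set from a disjoint closed convex set; the strict inequalities in \eqref{eq:proprho} come for free because $-A$ and each $B_i$ are themselves points of $E$. You instead work with the cone $\Lambda'=\mathrm{cone}\{B_1,\ldots,B_m,-A\}$, identify the admissible $\rho$'s (up to normalization) as the interior of its polar, and obtain nonemptiness of $U\cap\mathcal{S}_D^+$ from $\Lambda'\cap\mathcal{S}_D^+=\{0\}$ via a separation-plus-bipolar argument. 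Your verifications all check out: $\Lambda'$ is polyhedral hence closed, your pointedness argument for $\Lambda'$ is sound, the identification $\mathrm{int}(\Lambda'^\circ)=U$ is valid because the generators are nonzero and pointedness guarantees the interior is nonempty, and the final separation legitimately passes through the origin since both sets are cones. The paper's argument is shorter and more elementary --- compactness of the convex hull does the work that your detour through polarity does --- while yours makes the conic-duality structure explicit and reads as a theorem of alternatives; it also isolates cleanly \emph{where} each of the three hypotheses enters. Either proof is acceptable.
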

\begin{proof}
Let $E=\text{conv}\{-A,B_1,\ldots,B_m\}$. We will show that $S^+_D\cap E=\emptyset$.
Suppose there exists an $X=-p_0A+\sum\limits_{i=1}^mp_iB_i\in S^+_D\cap E$ with $p\in[0,1]^{m+1}$. 
If $p_0>0$, we could rescale $X$ by $p_0^{-1}$ and obtain a feasible point for \eqref{ineq:matrixineqinfeasible1}, a contradiction.
If $p_0=0$ and $X\not=0$, this would in turn contradict $\Lambda\cap S_D^+=\{0\}$. And if $X=0$, the cone $\Lambda$ would not be pointed. 
From these arguments it follows that $0\not\in E$.
The set $E$ is therefore closed, convex, compact and disjoint from the convex and closed set $\mathcal{S}^+_D$. We may thus 
find a hyperplane that strictly separates $\mathcal{S}^+_D$ from $E$. That is, a $\rho\in\M_D^{\text{sym}}$ such that 
w.l.o.g. $\tr{\rho X}\geq0$ for all $X\in\mathcal{S}_D^+$, as $0\in\mathcal{S}^+_D$, and $\tr{Y\rho}<0$ for all $Y\in E$.
As $\tr{\rho X}\geq0$ for all $X\geq0$,
%X\not=0$ by the strict separation, 
it follows that $\rho$ is positive semidefinite and it is clear that by normalizing
$\rho$ we may choose $\rho$ with $\tr{\rho}=1$.
\end{proof}

The main idea is now to show that under these conditions we may sketch the hyperplane in a way that it still separates the set of positive
semidefinite matrices and the sketched version of the set $\{\sum_{i=1}^m\gamma_iB_i-A|\gamma_i\geq0 \}$.

\begin{thm}\label{thm:conversefeasible}
Let $A,B_1,\ldots,B_m\in\M_D^{\text{sym}}\backslash\set{0}$ such that they satisfy the assumptions of \Fref{lem:strictlysephyper}. Moreover, let $\rho\in\mathcal{S}_D^+$ be as in Equation \eqref{eq:proprho}.
Set
\begin{align*}
\epsilon=\frac{1}{6}\min\Set{\left|\frac{\tr{\rho B_1}}{\norm{B_1}_1}\right|,\ldots,\left|\frac{\tr{\rho B_m}}{\norm{B_m}_1}\right|,\left|\frac{\tr{\rho A}}{\norm{A}_1}\right|}
\end{align*}
and take $S\in\M_{d,D}$ to be an $(\epsilon,\delta,k)$-JLT. Here, 
\begin{equation*}
k \geq \rank{A} + \rank{\rho} + \sum_{i = 1}^m \rank{B_i}.
\end{equation*}
Then
\begin{align}\label{ineq:lmisketch}
\sum_{i=1}^mc_iSB_iS^T-SAS^T\not\geq0   
\end{align}
for all $c\in\setR^m_+$, with probability at least $1-\delta$.
\end{thm}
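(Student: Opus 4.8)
The plan is to push the separating‑hyperplane argument of \Fref{lem:strictlysephyper} through the sketch. The witness produced there is a state $\rho\in\mathcal{S}_D^+$ satisfying \eqref{eq:proprho}; its image $S\rho S^T$ under the sketching map is again positive semidefinite, since $\rho\geq 0$ and conjugation by $S$ is a positive map. I claim that, on an event of probability at least $1-\delta$, the matrix $S\rho S^T$ strictly separates $\mathcal{S}_d^+$ from the sketched cone $\set{\sum_{i=1}^m c_iSB_iS^T-SAS^T\mid c\in\setR^m_+}$. Once this is established, \eqref{ineq:lmisketch} is immediate: for any fixed $c\in\setR^m_+$, writing $M=\sum_{i=1}^m c_iSB_iS^T-SAS^T$, we will have $\tr{S\rho S^TM}<0$, so $M$ cannot be positive semidefinite, because otherwise its Hilbert--Schmidt inner product with the positive semidefinite matrix $S\rho S^T$ would be nonnegative.

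To make this quantitative I would apply \Fref{lem:sketchhs} to the enlarged family $\set{A,\rho,B_1,\ldots,B_m}\subseteq\M_D^\sym$; the hypothesis $k\geq\rank{A}+\rank{\rho}+\sum_{i=1}^m\rank{B_i}$ is exactly what guarantees that $k$ bounds the total number of nonzero eigenvectors involved, so the lemma applies. Two preliminary observations: $\epsilon>0$ because $A$ and the $B_i$ are nonzero while $\tr{\rho A}$ and the $\tr{\rho B_i}$ are all nonzero by \eqref{eq:proprho}; and $\epsilon\leq 1$, since $\norm{\rho}_1=\tr{\rho}=1$ gives $|\tr{\rho B_i}|\leq\norm{\rho}_\infty\norm{B_i}_1\leq\norm{B_i}_1$ by Hölder, and likewise $|\tr{\rho A}|\leq\norm{A}_1$. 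Hence \Fref{lem:sketchhs} yields, with probability at least $1-\delta$ and using $\norm{\rho}_1=1$ once more,
\begin{gather*}
|\tr{S\rho S^TSB_iS^T}-\tr{\rho B_i}|\leq 3\epsilon\norm{B_i}_1\leq\tfrac12|\tr{\rho B_i}|,\\
|\tr{S\rho S^TSAS^T}-\tr{\rho A}|\leq 3\epsilon\norm{A}_1\leq\tfrac12|\tr{\rho A}|,
\end{gather*}
where the final inequality in each line is the definition of $\epsilon$.

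It then remains only to read off signs. From $\tr{\rho B_i}<0$ and the first bound, $\tr{S\rho S^TSB_iS^T}\leq\tfrac12\tr{\rho B_i}<0$; from $\tr{\rho A}>0$ (equivalently $\tr{-A\rho}<0$ in \eqref{eq:proprho}) and the second bound, $\tr{S\rho S^TSAS^T}\geq\tfrac12\tr{\rho A}>0$. Therefore, for any $c\in\setR^m_+$,
\begin{equation*}
\tr{S\rho S^TM}=\sum_{i=1}^m c_i\tr{S\rho S^TSB_iS^T}-\tr{S\rho S^TSAS^T}\leq-\tfrac12\tr{\rho A}<0,
\end{equation*}
since every term of the sum is nonpositive (as $c_i\geq 0$) and we subtract a strictly positive quantity. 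This proves the claim and hence the theorem. I do not anticipate a genuine obstacle: the only points demanding care are recognizing $S\rho S^T$ as the correct witness, verifying that $\epsilon\in(0,1]$ so that \Fref{lem:sketchhs} may be invoked, and making sure the rank budget for $k$ is large enough to account for $\rho$ and $A$ in addition to the $B_i$.
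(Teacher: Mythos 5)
Your proposal is correct and follows essentially the same route as the paper: sketch the separating hyperplane $\rho$ from \Fref{lem:strictlysephyper}, apply \Fref{lem:sketchhs} to the family $\set{A,\rho,B_1,\ldots,B_m}$ (which is what the rank budget for $k$ accounts for), and conclude that $S\rho S^T$ still strictly separates $\mathcal{S}_d^+$ from the sketched cone. Your explicit verification that $\epsilon\leq 1$ via H\"older and $\tr{\rho}=1$ is a small point of added rigor that the paper leaves implicit, but the argument is otherwise identical.
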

\begin{proof}
It should first be noted that $\rho$ exists and $|\tr{A\rho}|> 0$, $|\tr{B_i\rho}| >0$ for all $i \in [m]$ by Lemma \ref{lem:strictlysephyper}. Therefore, also $\epsilon > 0$.
The matrix $\rho$ defines a hyperplane that strictly separates the set 
\begin{align*}
E=\Set{\sum\limits_{i=1}^mc_iB_i-A|c\in\setR^{m}_+} 
\end{align*}
and $S_D^+$. We will now show that $S\rho S^T$ strictly separates the sets 
\begin{align*}
E_S=\Set{\sum\limits_{i=1}^mc_iSB_iS^T-SAS^T|c\in\setR^{m}_+} 
\end{align*}
and $S_d^+$ with probability at least $1-\delta$, from which the claim follows.
Note that by our choice of $\rho$ and $\epsilon$, it follows from \Fref{lem:sketchhs} that we have
\begin{align*}
\tr{S\rho S^T S B_i S^T}\leq\tr{\rho B_i}+3\epsilon\norm{B_i}_1<0 
\end{align*}
with probability at least $1-\delta$ and similarly for $-A$ instead of $B_i$. Therefore, it follows that $\mathrm{Tr}(Z S\rho S^T)<0$ for all $Z\in E_S$.
As $S\rho S^T$ is a positive semidefinite matrix, it follows that $\tr{Y S\rho S^T}\geq0$ for all
$Y\in\mathcal{S}_d^+$. We have therefore found a strictly separating hyperplane for $E_S$ and $S_D^+$ and the LMI \eqref{ineq:lmisketch} is infeasible.
\end{proof}
\Fref{thm:conversefeasible} suggests a way of sketching feasibility problems of the form
\begin{equation}\label{eq:feasibility}
\sum\limits_{i=1}^mc_iB_i-A\geq0, \qquad c\in\setR^{m}_+.   
\end{equation}
To obtain more concrete bounds on the probability that the original problem is infeasible although the sketched problem is feasible, one would need to know
the parameter $\epsilon$, which is not possible in most applications.

\section{Approximating the value of semidefinite programs through sketching}\label{sec:approxvalue}

We will now show how to approximate with high probability the value of a sketchable SDP by first conjugating both the target matrix and the matrices
that describe the constraints with JLTs and subsequently solving a smaller SDP. 
The next theorem shows that in general it is not possible to approximate with high probability the value of a sketchable SDP using linear sketches.

\begin{thm} \label{thm:nofreeSDP}
Let $\Phi:\M_{2D}\to\setR^{d}$ be a random linear map such that for all sketchable SDPs there exists an algorithm which allows us to estimate the value of an SDP 
up to a constant factor $1\leq \tau < \frac{2}{\sqrt{3}}$ given the sketch
$\{\Phi(A),\Phi(B_1),\ldots,\Phi(B_m)\}$ with probability at least
$9/10$. Then $d=\Omega(D^2)$.
\end{thm}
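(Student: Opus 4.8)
The natural strategy is an information\hyp{}theoretic lower bound via a communication / counting argument, of exactly the flavour used to prove that linear sketches of $D\times D$ matrices need $\Omega(D^2)$ measurements to distinguish many states. The idea is to exhibit a large family $\{M_\alpha\}$ of sketchable SDPs — indexed by a set of size $2^{\Omega(D^2)}$ — such that the optimal values of the SDPs built from $M_\alpha$ and $M_\beta$ differ by more than a factor $\tau$ whenever $\alpha\neq\beta$, while all the matrices $A,B_1,\dots,B_m$ defining these SDPs live in a fixed set of bounded Schatten norm. If a single random linear map $\Phi$ together with an estimation algorithm succeeded with probability $\ge 9/10$ for \emph{all} these instances, then by a union bound / averaging there would be one deterministic realisation of $\Phi$ succeeding on a $9/10$ fraction of the family; its output would then encode enough information to identify $\alpha$ within the family, forcing the image dimension $d$ to satisfy $2^{cd}\gtrsim 2^{\Omega(D^2)}$, i.e.\ $d=\Omega(D^2)$.

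**Building the hard family.**
Concretely I would work inside $\M_{2D}$ so that I have a $D$\hyp{}dimensional ``flag'' subspace available: for a subspace $W\subseteq\setR^{2D}$ of dimension $D$, let $P_W$ be the orthogonal projector onto $W$ and consider the SDP with objective $A=P_W$ (or $A$ equal to a rank\hyp{}one matrix selecting a direction) and a fixed collection of constraints $\tr{B_i X}\le\gamma_i$ — e.g.\ $\tr{X}\le 1$ together with a few constraints pinning $X$ to be supported near $W$ — so that the value of the SDP reads off something like $\max\{\tr{P_W X}: X\ge 0, \tr X\le 1\}$ versus a competitor subspace, and this value changes by the constant multiplicative gap $\tau$ when $W$ is replaced by a subspace $W'$ that is ``far'' from $W$. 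The Grassmannian of $D$\hyp{}planes in $\setR^{2D}$ contains a $2^{\Omega(D^2)}$\hyp{}sized packing in the relevant metric (a standard volumetric/net estimate), which supplies the required exponential family while keeping $\|A\|_\infty\le 1$, hence $\|A\|_1\le 2D$, so the instances are genuinely ``sketchable'' in the paper's sense up to the usual polynomial factors; the point of the theorem is precisely that \emph{linearity of the sketch alone} cannot beat $\Omega(D^2)$, independently of norm bounds.

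**Turning success probability into a counting bound.**
Given such a family $\mathcal F$ with $|\mathcal F|=2^{\Omega(D^2)}$ and pairwise value gap $>\tau$, I would argue: since $\Phi$ and the algorithm succeed with probability $\ge 9/10$ on each fixed instance, Fubini gives a fixed $\Phi_0$ in the support for which the algorithm returns a $\tau$\hyp{}approximation of the value on at least a $9/10$ fraction of $\mathcal F$. Because the values are pairwise separated by more than the factor $\tau$, a correct $\tau$\hyp{}approximation uniquely pins down the instance among $\mathcal F$; hence the map sending $\alpha\mapsto\Phi_0(A_\alpha)\in\setR^d$ is injective on a $(9/10)|\mathcal F|$\hyp{}sized subset. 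An injective map from a set of size $2^{\Omega(D^2)}$ into $\setR^d$ forces $d$ to grow — the clean way is to note the sketch lands in a $d$\hyp{}dimensional \emph{linear image} of the span of the $A_\alpha$, so $d\ge \dim\mathrm{span}\{A_\alpha\}$ if distinct instances must have distinct sketches, and the packing can be chosen inside a $\Theta(D^2)$\hyp{}dimensional subspace of $\M_{2D}^{\sym}$ with all pairwise differences of full ``spread'', giving $d=\Omega(D^2)$.

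**Main obstacle.**
The routine parts (the packing number of the Grassmannian, the union bound, the injectivity\hyp{}to\hyp{}dimension step) are standard. The real work is \emph{engineering the SDP instances} so that (a) they fit \Fref{defi:sketchablesdp} with a \emph{fixed} constraint set $B_1,\dots,B_m,\gamma$ while only the objective (or only a controlled part of the data) varies over the exponential family, (b) the optimal value is a smooth, explicitly computable functional of the varying parameter whose oscillation over the packing exceeds the multiplicative threshold $\tfrac{2}{\sqrt3}$ — the constant $\tfrac{2}{\sqrt3}$ strongly suggests the extremal configuration involves two unit vectors at a $30^\circ$ angle, i.e.\ a gap like $1$ vs.\ $\cos^2(\pi/6)=\tfrac34$, so I would aim the construction at exactly that $1:\tfrac34$ value gap, and (c) the linearity of $\Phi$ is genuinely used, so the bound cannot be dodged by a nonlinear encoding. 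Making (a)–(c) compatible — in particular keeping the constraint data fixed while the value still varies enough — is the delicate point, and is presumably where the factor $2D$ (as opposed to $D$) in the domain $\M_{2D}$ is spent, using the extra $D$ dimensions as ``scratch space'' to linearise the dependence of the value on the chosen subspace.
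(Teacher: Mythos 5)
Your proposal does not match the paper's proof, and as it stands it has a genuine gap. The paper's argument is a two-line reduction: the operator norm of $G\in\M_D$ is the value of a standard sketchable SDP over $\M_{2D}$ (this is where the $2D$ in the statement comes from), so a linear sketch of the SDP data that estimates the value up to a factor $\tau<2/\sqrt{3}$ would estimate $\norm{G}_\infty$ up to that factor; by \cite[Theorem 6.5]{Woodruff2014} any such linear sketch requires dimension $\Omega(D^2)$, and the threshold $2/\sqrt{3}$ is inherited directly from that cited theorem (not from a $30^\circ$-angle configuration). You instead set out to reprove the operator-norm-type lower bound from scratch, and you explicitly defer the construction of the hard family --- which is precisely the nontrivial mathematical content --- so the proposal is a plan rather than a proof.

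The more serious problem is that the counting step you rely on is false as stated. From ``a fixed $\Phi_0$ succeeds on a $9/10$ fraction of $\mathcal F$'' you conclude that $\alpha\mapsto\Phi_0(A_\alpha)$ is injective on a large subset of $\mathcal F$ and that this forces $d=\Omega(D^2)$. But an injective map from a finite set --- even one of size $2^{\Omega(D^2)}$ --- into $\setR^d$ exists for every $d\geq 1$, and a \emph{linear} map into $\setR^1$ is generically injective on any finite subset of $\M_{2D}^{\sym}$; injectivity on a packing does not lower-bound the image dimension, and the fallback claim $d\geq\dim\mathrm{span}\{A_\alpha\}$ only holds if you need injectivity on the full span, which you do not. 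The actual lower bounds for linear sketches of the operator norm (Andoni--Nguyen, Li--Nguyen--Woodruff, as surveyed in \cite{Woodruff2014}) are statistical: one applies Yao's principle to fix a deterministic $\Phi_0$, then shows that for a suitable \emph{distribution} over inputs the conditional distribution of the instance given the low-dimensional sketch does not carry enough information to distinguish the two cases --- a mutual-information or total-variation argument, not a cardinality one. To repair your route you would need to import that machinery, at which point you are essentially reproving the cited theorem; the efficient path is the paper's reduction.
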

\begin{proof}
It is well-known that the operator norm of a matrix $G \in \mathcal M_D$ can be computed via an SDP. A linear sketch of the constraints of this SDP would thus allow to approximately compute the operator norm with high probability. However,
in \cite[Theorem 6.5]{Woodruff2014} it was  shown that any algorithm that estimates the operator
norm of a matrix from a linear sketch with probability larger than $9/10$ must have sketch dimension $\Omega(D^2)$. 
\end{proof}

The above result remains true even if we restrict to SDPs that have optimal points with small 
Schatten $1$-norm and low rank. However, we will see below that sketching becomes possible if  the matrices that define
the constraints and the target function have a small Schatten $1$-norm. 
\begin{defi}[Sketched SDP]
Let $A, B_1, \ldots, B_m \in \MM_{D}^{\sym}$, $\eta,\gamma_1,\ldots,\gamma_m\in\setR$ and $\epsilon>0$. Let $X^*\in\mathcal{S}_D^+$ be an optimal
point of the sketchable SDP defined through these matrices. Given that $\tr{X^*}\leq\eta$ and 
given a random matrix 
$S\in\M_{d,D}$, we call the optimization problem

\begin{align}
\mathrm{maximize\qquad} & \tr{SA S^TY} \nonumber \\
\mathrm{subject~to\qquad} & \tr{SB_i S^T Y} \leq \gamma_i + \mu \norm{B_i}_1, \qquad i \in [m]\label{eq:sketchedproblem} \\
					& Y \geq 0 \nonumber
\end{align}
with $\mu = 3 \epsilon \eta$ the sketched SDP. 
\end{defi}
The motivation for defining the sketched SDP is given by the following theorem, which follows directly form \Fref{lem:sketchhs}. 
\begin{thm}\label{thm:uppersdpbysketch}
Let $A, B_1, \ldots, B_m \in \MM_{D}^{\sym}$, $\eta,\gamma_1,\ldots,\gamma_m\in\setR$ and $\epsilon>0$. 
Denote by $\alpha$ the value of the sketchable SDP and assume it is attained at an optimal point $X^*$ which satisfies $\tr{X^*}\leq\eta$.
Moreover, let $S\in\M_{d,D}$ be an 
$(\epsilon,\delta,k)$-JLT, with 
\begin{equation*}
 k\geq\rank{X^*}+\rank{A}+\sum\limits_{i=1}^m\rank{B_i}.
\end{equation*}
Let $\alpha_S$ be the value of the sketched SDP
defined by $A$, $B_i$ and $S$. Then
\begin{equation*}
 \alpha_S+3\epsilon\eta\norm{A}_1\geq\alpha
\end{equation*}
with probability at least $1-\delta$.
\end{thm}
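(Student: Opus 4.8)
The plan is to show that the image $Y^* = SX^*S^T$ of the optimal point $X^*$ of the original sketchable SDP is, with probability at least $1-\delta$, a feasible point of the sketched SDP, and then compare objective values at that point. Once $Y^*$ is feasible for the sketched problem, the value $\alpha_S$ of the sketched problem is at least $\tr{SAS^T Y^*} = \tr{SAS^T S X^* S^T}$, and \Fref{lem:sketchhs} controls how far this is from $\tr{AX^*} = \alpha$.

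\medskip

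\textbf{Step 1: Apply \Fref{lem:sketchhs} to the matrices $A, B_1,\dots,B_m$ together with $X^*$.} We treat $X^*$ as one of the matrices in the list, so we need the JLT parameter $k$ to satisfy $k \geq \rank{X^*} + \rank{A} + \sum_{i=1}^m \rank{B_i}$, which is exactly the hypothesis. By \Fref{lem:sketchhs}, with probability at least $1-\delta$ we have simultaneously
\begin{equation*}
|\tr{SB_iS^T S X^* S^T} - \tr{B_i X^*}| \leq 3\epsilon \norm{B_i}_1 \norm{X^*}_1
\end{equation*}
for all $i \in [m]$, and likewise $|\tr{SAS^T S X^* S^T} - \tr{A X^*}| \leq 3\epsilon \norm{A}_1 \norm{X^*}_1$. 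Here I use that $X^* \geq 0$, so $\norm{X^*}_1 = \tr{X^*} \leq \eta$. Condition on this event for the remainder of the argument.

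\medskip

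\textbf{Step 2: Check feasibility of $Y^* = SX^*S^T$ for the sketched SDP.} Since $X^* \geq 0$ and conjugation by $S$ is a positive map, $Y^* \geq 0$. For the linear constraints: from Step 1 and $\tr{B_i X^*} \leq \gamma_i$ (feasibility of $X^*$ for the original SDP),
\begin{equation*}
\tr{SB_iS^T Y^*} \leq \tr{B_i X^*} + 3\epsilon \norm{B_i}_1 \norm{X^*}_1 \leq \gamma_i + 3\epsilon\eta \norm{B_i}_1 = \gamma_i + \mu\norm{B_i}_1,
\end{equation*}
which is precisely the constraint in \eqref{eq:sketchedproblem} with $\mu = 3\epsilon\eta$. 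So $Y^*$ is feasible.

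\medskip

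\textbf{Step 3: Conclude.} Because $Y^*$ is a feasible point of the (maximization) sketched SDP, its value $\alpha_S$ satisfies $\alpha_S \geq \tr{SAS^T Y^*}$. Combining with the $A$-bound from Step 1,
\begin{equation*}
\alpha_S \geq \tr{SAS^T Y^*} \geq \tr{A X^*} - 3\epsilon \norm{A}_1 \norm{X^*}_1 \geq \alpha - 3\epsilon\eta\norm{A}_1,
\end{equation*}
i.e. $\alpha_S + 3\epsilon\eta\norm{A}_1 \geq \alpha$, which holds on the event of probability at least $1-\delta$. This finishes the proof.

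\medskip

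\textbf{Main obstacle.} There is no serious obstacle — the theorem is essentially a direct corollary of \Fref{lem:sketchhs}, as the statement in the excerpt already advertises. The only things to be careful about are: (i) including $X^*$ in the family of matrices fed to \Fref{lem:sketchhs} so that the rank bound on $k$ matches, and all the estimates hold on a \emph{single} good event of probability $1-\delta$ (the JLT property is a joint statement over the relevant $k$-element vector set, so one application suffices); and (ii) using $\norm{X^*}_1 = \tr{X^*} \leq \eta$, which relies on positivity of $X^*$, to turn the Schatten-$1$-norm factor into the parameter $\eta$ appearing in $\mu$. One should also note that this gives only the upper bound direction ($\alpha_S$ cannot be much smaller than $\alpha$); a matching lower bound requires separate hypotheses and is presumably treated elsewhere in the paper.
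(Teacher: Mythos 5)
Your proof is correct and is exactly the argument the paper intends: the paper gives no explicit proof, stating only that the theorem ``follows directly from \Fref{lem:sketchhs}'', and your three steps (apply the lemma to the family $\{X^*, A, B_1,\ldots,B_m\}$, verify feasibility of $SX^*S^T$ for the sketched SDP using $\norm{X^*}_1=\tr{X^*}\leq\eta$, then compare objective values) are precisely the omitted details. The only cosmetic mismatch is that \Fref{lem:sketchhs} formally assumes $\epsilon\leq1$ while the theorem only states $\epsilon>0$, but that discrepancy is already present in the paper itself.
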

Note that Theorem \ref{thm:uppersdpbysketch} does not rule out the possibility that the value of the sketched problem is much larger than that of the sketchable SDP.
To investigate this issue, we introduce the following:

\begin{defi}[Relaxed SDP]\label{def:relaxedsdp}
Let $A, B_1, \ldots, B_m \in \MM_{D}^{\sym}$, $\eta,\gamma_1,\ldots,\gamma_m\in\setR$ and $\epsilon>0$. Given that an optimal point $X^*$ of the sketchable SDP defined 
through these matrices  satisfies $\tr{X^*}\leq\eta$, we call the optimization problem

\begin{align}
\mathrm{maximize\qquad} & \tr{A X} \nonumber \\
\mathrm{subject~to\qquad} & \tr{B_i X} \leq \gamma_i + \tilde\epsilon_i, \quad i \in [m]  \label{eq:relaxedproblem}\\
					& X \geq 0 \nonumber
\end{align}
with $\tilde\epsilon_i = 3 \epsilon \eta \norm{B_i}_1$ the relaxed SDP. 
\end{defi}
We will obtain lower bounds on the value of the sketchable SDP in terms of the value of the sketched SDP through continuity bounds on the relaxed SDP.
%As the continuity bound we use is for SDPs given in equality form, we begin by giving an equivalent formulation of a sketchable SDP with equality constraints. 
The method of using duality to derive perturbation bounds for a convex optimization problem used here is standard and we refer 
to \cite[Section 5.6]{boydconvex} for a similar derivation. 
We denote by $\AA(\tilde \epsilon)$ the feasible set of the relaxed SDP as in Definition \ref{def:relaxedsdp} for some $\tilde\epsilon \in \setR^m_+$. 
With this notation, $\AA(0)$ is the feasible set of the sketchable SDP. Analogously, we denote by $\alpha(\tilde\epsilon)$ and $\alpha(0)$ the optimal value of the relaxed problem and of the sketchable SDP, respectively. Note that the following result is not probabilistic and holds regardless of the sketching matrix $S$ used.
\begin{thm}\label{thm:uppersketch}
We are in the setting of \Fref{defi:sketchablesdp}. Assume that there exists an $X_0>0$ such that all the constraints of the sketchable SDP
are strictly satisfied and that the dual problem is feasible. Then, the value of the sketched SDP $\alpha_S$ is bounded by
\begin{align*}
 \alpha_S\leq \alpha(0) +  C \norm{y^\ast}_1.
\end{align*}
Here $y^\ast$ is an optimal solution to the dual problem and
\begin{equation*}
C = \max\Set{ 3 \epsilon \eta \norm{B_i}_1 | i \in [m]},
\end{equation*}
where $\eta\geq\tr{X^*}$ for an optimal point $X^*$ of the sketchable SDP.
\end{thm}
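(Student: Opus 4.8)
The plan is to bound $\alpha_S$ by relating the sketched SDP to the relaxed SDP $\alpha(\tilde\epsilon)$, and then to control $\alpha(\tilde\epsilon) - \alpha(0)$ via weak duality. First I would observe that $\alpha_S \leq \alpha(\tilde\epsilon)$ where $\tilde\epsilon_i = 3\epsilon\eta\norm{B_i}_1$: indeed, although the sketched SDP optimizes over $Y \in \mathcal{S}_d^+$ rather than $X \in \mathcal{S}_D^+$, the map $Y \mapsto S^T Y S$ is positive and linear, but this does not directly produce a feasible point of the relaxed SDP. The cleaner route is via duality of the sketched SDP itself: the dual of the sketched SDP has the same feasible cone structure, and I would argue that any dual-feasible point $y^\ast$ of the \emph{original} problem, after accounting for the shifted right-hand sides $\gamma_i + \mu\norm{B_i}_1$, yields a dual-feasible point for the sketched problem with objective $\langle y^\ast, \gamma\rangle + \sum_i \mu \norm{B_i}_1 y_i^\ast$. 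Weak duality for the sketched SDP then gives $\alpha_S \leq \langle y^\ast, \gamma + \mu\|B\|_1\text{-shift}\rangle \leq \alpha(0) + C\norm{y^\ast}_1$, using $\mu\norm{B_i}_1 \leq C$ for each $i$ and strong duality for the original problem (which holds by the Slater-type assumption that $X_0 > 0$ satisfies the constraints strictly).

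The subtle point is why the shifted dual point is feasible for the sketched dual. The dual constraint of the sketched SDP reads $\sum_i c_i\, SB_iS^T - SAS^T \geq 0$ with $c \in \setR^m_+$; but $\sum_i c_i SB_iS^T - SAS^T = S\big(\sum_i c_i B_i - A\big)S^T$, and if $\sum_i c_i B_i - A \geq 0$ then applying the positive map $X \mapsto SXS^T$ preserves positivity. Hence $y^\ast$, being dual-feasible for the original problem, is automatically dual-feasible for the sketched problem (the dual feasible set of the sketched problem \emph{contains} that of the original). The dual objective of the sketched problem at $y^\ast$ is $\langle y^\ast, \gamma + \mu(\norm{B_1}_1,\ldots,\norm{B_m}_1)\rangle = \langle y^\ast, \gamma\rangle + \mu\sum_i \norm{B_i}_1 y_i^\ast \leq \langle y^\ast, \gamma\rangle + C\norm{y^\ast}_1$, where I use $y^\ast \in \setR^m_+$ so $\norm{y^\ast}_1 = \sum_i y_i^\ast$ and $\mu\norm{B_i}_1 = 3\epsilon\eta\norm{B_i}_1 \leq C$. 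By weak duality applied to the sketched SDP, $\alpha_S$ is at most this dual value. Finally, since $y^\ast$ is an \emph{optimal} dual solution of the original problem and strong duality holds there, $\langle y^\ast, \gamma\rangle = \alpha(0)$, yielding $\alpha_S \leq \alpha(0) + C\norm{y^\ast}_1$.

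The main obstacle is bookkeeping around feasibility of the dual of the sketched SDP: one must check that the sketched dual is not infeasible (so that weak duality is non-vacuous), which is exactly guaranteed by dual feasibility of the original problem together with the positivity of $X \mapsto SXS^T$. One should also note that the sketched \emph{primal} is feasible (e.g.\ $Y = 0$ works if all $\gamma_i + \mu\norm{B_i}_1 \geq 0$, or more robustly because $SX_0S^T$ with the relaxed right-hand sides is feasible whenever $X_0$ is strictly feasible for the original problem and $S$ has full row rank; but in any case weak duality $\alpha_S \leq$ (dual value) holds as soon as the sketched primal is feasible, and if it is infeasible the statement is vacuous or interpreted as $\alpha_S = -\infty$). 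No probabilistic input is needed, consistent with the remark preceding the theorem.
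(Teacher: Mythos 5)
Your proof is correct, but it is organized differently from the paper's. The paper first maps primal-feasible points of the sketched SDP into the relaxed SDP via $Y\mapsto S^TYS$ (so $\alpha_S\leq\alpha(\tilde\epsilon)$) and then proves a perturbation bound $\alpha(\tilde\epsilon)\leq\alpha(0)+\langle\tilde\epsilon,y^\ast\rangle$ using the Lagrangian; you instead transfer the \emph{dual} optimal point $y^\ast$ into the dual of the sketched SDP via positivity of $X\mapsto SXS^T$ and invoke weak duality there. These are dual views of the same computation — in both cases the chain is $\tr{SAS^TY}\leq\sum_i y_i^\ast\tr{SB_iS^TY}\leq\langle y^\ast,\gamma\rangle+\sum_i y_i^\ast\tilde\epsilon_i$ — but your version is arguably more direct for this particular statement, since it bypasses the relaxed SDP as an intermediate object, and you correctly handle the edge cases (non-vacuousness of weak duality, primal infeasibility of the sketch). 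One small inaccuracy: you claim that $Y\mapsto S^TYS$ ``does not directly produce a feasible point of the relaxed SDP.'' It does: by cyclicity of the trace, $\tr{SB_iS^TY}=\tr{B_i\,S^TYS}$, so any $Y$ feasible for the sketched SDP yields $X=S^TYS\geq0$ feasible for the relaxed SDP with the same objective value; this is exactly the paper's first step, and it is worth recognizing that the route you dismissed is equally available.
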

\begin{proof}
By Slater's condition \cite[Theorem 2.2]{Watrous2009}, strong duality holds and there is a $y^\ast \geq 0$ which achieves the optimal value.
Note that, given a feasible point $Y$ to the sketched SDP, $S^TYS$ is a feasible point for the relaxed problem by the cyclicity of the trace. Thus, the relaxed SDP gives an upper bound for the sketched SDP.
Hence, for any $X \geq 0$,
\begin{align}
\alpha(0) %&= \sum_{j = 1}^{m} y_j^\ast \gamma_j \nonumber \\
& \geq \sum_{j = 1}^{m} y_j^\ast \gamma_j - \tr{\left[\sum_{i = 1}^{m} y^\ast_i B_i - A\right] X}\nonumber \\
& = \tr{A X}- \sum_{i = 1}^{m} y^\ast_i \left[\tr{B_i X} - \gamma_i\right]. \label{eq:quasilagrange}
\end{align}
The first line holds by duality. If we take the supremum over $X \in \AA(\epsilon)$, we obtain
\begin{equation*}
\alpha(\tilde\epsilon) \leq \alpha(0) + \mscalar{\tilde{\epsilon}}{y^\ast}, 
\end{equation*}
from $y^\ast_i \geq 0$. Here, $\tilde{\epsilon}_i = 3 \eta \epsilon \norm{B_i}_1$, $i \in [m]$.
The assertion then follows by an application of H\"older's inequality.
\end{proof}
Combining \Fref{thm:uppersdpbysketch} and \Fref{thm:uppersketch} it is possible to pick
$\epsilon$ small enough to have an arbitrarily small additive error under some structural assumptions on the SDP.
That is, we need bounds on the Schatten $1$-norms both of $A$ and $B_i$ and we need a bound on the Schatten $1$-norm of an optimal solution to the sketchable SDP. Moreover, we need a bound on the $1$-norm of a dual solution as in \cite{brandao2017}. The following proposition provides a generic bound of this kind.

\begin{prop}
Assume that there exists $X_0 \in \AA(0)$ such that $X_0 > 0$ and 
the constraints are strictly satisfied. Then the value of the sketched SDP $\alpha_S$ is bounded by
\begin{align*}
\alpha_S\leq \alpha(0) +  \epsilon C_1 \left(\alpha(0) - \tr{A X_0}\right) / C_2.
\end{align*}
Here,
\begin{align*}
C_1 &= \max\Set{ 3 \eta \norm{B_i}_1 | i \in [m]}, \\
C_2 &= \min \Set{ \left(\gamma_i - \tr{B_i X_0}\right)| i \in [m]},
\end{align*}
where $\eta\geq\tr{X^*}$ for an optimal point $X^*$ of the sketchable SDP.
\end{prop}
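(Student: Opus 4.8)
The plan is to feed the a priori estimate of \Fref{thm:uppersketch} with an explicit bound on the $1$-norm of an optimal dual solution that comes from the Slater point $X_0$. First I would check that the hypotheses of \Fref{thm:uppersketch} are met: since $X_0>0$ satisfies all constraints of the sketchable SDP strictly and the primal value $\alpha(0)$ is finite (it is attained at $X^*$), Slater's condition guarantees that strong duality holds and the dual \eqref{dualsketchablesdp} is feasible with its optimum attained at some $y^\ast\geq0$ with $\langle y^\ast,\gamma\rangle=\alpha(0)$. Applying \Fref{thm:uppersketch} then gives
\[
\alpha_S\;\leq\;\alpha(0)+\max\Set{3\epsilon\eta\norm{B_i}_1 \,|\, i\in[m]}\,\norm{y^\ast}_1\;=\;\alpha(0)+\epsilon\,C_1\,\norm{y^\ast}_1 .
\]

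The next step is to bound $\norm{y^\ast}_1=\sum_{i=1}^m y^\ast_i$ (using $y^\ast\geq0$). Since $y^\ast$ is dual feasible, $\sum_i y^\ast_i B_i-A\geq0$, and since $X_0>0$ we get $\tr{(\sum_i y^\ast_i B_i-A)X_0}\geq0$, i.e.
\[
\alpha(0)=\sum_{i=1}^m y^\ast_i\gamma_i\;\geq\;\tr{A X_0}+\sum_{i=1}^m y^\ast_i\bigl(\gamma_i-\tr{B_i X_0}\bigr).
\]
Because the constraints are strictly satisfied at $X_0$, each $\gamma_i-\tr{B_i X_0}$ is strictly positive, hence bounded below by $C_2=\min_i(\gamma_i-\tr{B_i X_0})>0$; combined with $y^\ast\geq0$ this yields $\sum_i y^\ast_i(\gamma_i-\tr{B_i X_0})\geq C_2\norm{y^\ast}_1$, so
\[
\norm{y^\ast}_1\;\leq\;\frac{\alpha(0)-\tr{A X_0}}{C_2}.
\]
Substituting this into the first displayed inequality gives exactly $\alpha_S\leq \alpha(0)+\epsilon C_1\bigl(\alpha(0)-\tr{A X_0}\bigr)/C_2$, which is the claim.

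I do not expect a genuine obstacle here: the substantive work is already contained in \Fref{thm:uppersketch}, and the remaining content is the standard fact that a strictly feasible point controls the norm of the dual optimizer. The only points that need a little care are (i) verifying that Slater's condition is actually applicable, so that a dual optimizer $y^\ast$ exists and \Fref{thm:uppersketch} can be invoked, and (ii) checking that $C_2>0$ — which is precisely the strict feasibility hypothesis — so that the right-hand side is well defined.
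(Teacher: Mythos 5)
Your proof is correct and follows essentially the same route as the paper: the paper also bounds $\norm{y^\ast}_1$ by evaluating the Lagrangian inequality (its Equation \eqref{eq:quasilagrange}) at the Slater point $X_0$, obtaining $\sum_i y^\ast_i \leq (\alpha(0)-\tr{AX_0})/\min_i(\gamma_i-\tr{B_iX_0})$, and then plugs this into \Fref{thm:uppersketch}. Your additional care about the applicability of Slater's condition and the positivity of $C_2$ is sound and consistent with the paper's (more terse) argument.
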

\begin{proof}
We need to bound $\norm{y^\ast}_1$ in \Fref{thm:uppersketch}. From \Fref{eq:quasilagrange} and $\left[\tr{B_i X_0} - \gamma_i\right] < 0$, it follows that
\begin{equation*}
\sum_{i = 1}^{m} y^\ast_i \leq (\alpha(0) - \tr{A X_0}) / \min_{i\in [m]}\left[\gamma_i - \tr{B_i X_0} \right].
\end{equation*}
With $y^\ast \geq 0$, the assertion follows from \Fref{thm:uppersketch}.
\end{proof}

In the case that all the $\gamma_i>0$ for a sketchable SDP we may obtain a bound on the value and an approximate solution to it in a much simpler way.
This class includes the so-called semidefinite packing problems~\cite{Iyengar2005}. These are defined as problems in which all $B_i\geq0$, and so also $\gamma_i\geq0$. Note that we may
set all $\gamma_i=1$ w.l.o.g. by dividing $B_i$ by $\gamma_i$. We then obtain:
\begin{thm}\label{thm:lowerpacking}
For a sketchable SDP with $\gamma_i=1$ and $\nu=3\epsilon\eta \max\limits_{i\in[m]}\norm{B_i}_1$, we have that
\begin{equation}\label{equ:lowerpacking}
 \frac{\alpha_S}{1+\nu}\leq \alpha.
\end{equation}
Moreover, denoting by $X_S^*$ an optimal point of the sketched SDP, we have that $\frac{1}{1+\nu}S^TX_S^*S$ is a feasible point 
of the sketchable SDP that attains this lower bound. 
\end{thm}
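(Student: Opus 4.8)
The plan is to prove this by a direct, deterministic scaling argument; no probabilistic input is needed, and the sketching matrix $S$ may be arbitrary --- the JLT structure only enters when \eqref{equ:lowerpacking} is combined with \Fref{thm:uppersdpbysketch} to sandwich $\alpha$ from both sides. The key observation is that an optimal point of the sketched SDP, pulled back by $S^{T}(\cdot)S$ and rescaled by $\tfrac{1}{1+\nu}$, becomes a \emph{feasible} point of the original packing SDP whose objective value equals $\alpha_S/(1+\nu)$, which at once yields $\alpha\geq\alpha_S/(1+\nu)$.

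Concretely, I would take $X_S^*\geq 0$ to be an optimal point of the sketched SDP, so that $\tr{SAS^{T}X_S^*}=\alpha_S$ and $\tr{SB_iS^{T}X_S^*}\leq 1+3\epsilon\eta\norm{B_i}_1$ for every $i\in[m]$, and set $Z:=\tfrac{1}{1+\nu}S^{T}X_S^*S$. Then $Z\geq 0$, since conjugation preserves positive semidefiniteness. By cyclicity of the trace,
\begin{equation*}
\tr{B_iZ}=\tfrac{1}{1+\nu}\tr{SB_iS^{T}X_S^*}\leq\tfrac{1}{1+\nu}\bigl(1+3\epsilon\eta\norm{B_i}_1\bigr),
\end{equation*}
and since $3\epsilon\eta\norm{B_i}_1\leq 3\epsilon\eta\max_{j\in[m]}\norm{B_j}_1=\nu$, the right-hand side is at most $1=\gamma_i$. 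Hence $Z$ is feasible for the sketchable SDP, so $\alpha\geq\tr{AZ}$, and cyclicity of the trace again gives $\tr{AZ}=\tfrac{1}{1+\nu}\tr{SAS^{T}X_S^*}=\alpha_S/(1+\nu)$. This establishes \eqref{equ:lowerpacking} and exhibits $\tfrac{1}{1+\nu}S^{T}X_S^*S$ as a feasible point of the sketchable SDP attaining the bound.

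I do not anticipate a genuine obstacle. The one structural point deserving emphasis is that the packing normalization $\gamma_i=1$ is exactly what allows a single global factor $1+\nu$ to repair all $m$ relaxed constraints simultaneously; for a general sketchable SDP this uniform rescaling is unavailable, which is why the earlier results had to proceed through the duality-based perturbation bound of \Fref{thm:uppersketch}. One should also note that the argument presupposes, as in the statement, that the sketched SDP attains its optimum at some $X_S^*$.
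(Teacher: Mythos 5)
Your proof is correct and is exactly the paper's argument: the paper's one-line proof likewise observes that $\tfrac{1}{1+\nu}S^{T}X_S^{*}S$ is feasible for the sketchable SDP and that cyclicity of the trace gives the value $\alpha_S/(1+\nu)$. You have simply filled in the details (positivity under conjugation, the bound $3\epsilon\eta\norm{B_i}_1\leq\nu$ repairing each constraint) that the paper leaves implicit.
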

\begin{proof}
The lower bound in Equation \eqref{equ:lowerpacking} follows immediately from the cyclicity of the trace, as $\frac{1}{1+\nu}S^TX_S^*S$ is a feasible point of the sketchable SDP.
\end{proof}

\section{Complexity and memory gains}
\label{sec:complexity}
In this section, we will discuss how much we gain by considering the sketched SDP instead of the sketchable SDP. 
We focus on the results of \Fref{sec:approxvalue}, but the discussion carries over to the results of \Fref{sec:feasibility}.
Throughout this section we will assume that we are guaranteed that the Schatten $1$-norms both of an optimal solution to our SDP and of the matrices that define the constraints are $\OO(1)$. We will suppose that upper bounds on the Schatten $1$-norm of both an optimal solution and the constraints are given.

To generate the sketched SDP, we need to compute $m + 1$ matrices of the form $S B S^T$, where $B \in \MM_D$. Each of this computations needs $\OO(\max\Set{\mathrm{nnz}(B), Dd} \epsilon^{-1} \log(k \delta^{-1}))$ operations.
In the worst case, when all matrices $\Set{A, B_1, \ldots, B_m}$ are dense and have full rank, this becomes $\OO(m D^2 \log(mD))$ operations to generate the sketched SDP for fixed $\epsilon$ and $\delta$. We obtain from these considerations and \Fref{thm:uppersdpbysketch}:

\begin{prop}
Let $A, B_1, \ldots, B_m \in \MM_D^{\sym}$, $\gamma_1, \ldots, \gamma_m\in\setR$ of a sketchable SDP be given. Furthermore, let $z := \max\Set{\mathrm{nnz}(A),\mathrm{nnz}(B_1),\ldots, \mathrm{nnz}(B_m)}$ and $\mathrm{SDP}(m,d, \zeta)$ be the
complexity of solving a sketchable SDP (up to accuracy $\zeta$) of dimension $d$. Then a number of 
\begin{equation*}
\OO(\max\Set{z, D \epsilon^{-2} \log(k \delta^{-1})} \epsilon^{-1} m\log(k \delta^{-1}) + \mathrm{SDP}(m,\epsilon^{-2}\log(k\delta^{-1}),\zeta))
\end{equation*}
operations is needed to generate and solve the sketched SDP, where $k$ is defined as in \Fref{thm:uppersdpbysketch}.
\end{prop}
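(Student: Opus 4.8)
The statement is an accounting of the two phases of the black-box algorithm, namely forming the sketched SDP and then solving it, so the plan is simply to bound the cost of each phase and add. First I would take the sketching matrix $S\in\M_{d,D}$ to be the sparse JLT of \Fref{thm:bestJLT}, with sketching dimension $d=\OO(\epsilon^{-2}\log(k\delta^{-1}))$ and at most $s=\OO(\epsilon^{-1}\log(k\delta^{-1}))$ nonzero entries per column, and with $k$ chosen as in \Fref{thm:uppersdpbysketch}. With this choice, \Fref{thm:uppersdpbysketch} already guarantees that an optimal solution to the sketched SDP yields the stated approximation with probability at least $1-\delta$, so correctness is inherited and only the operation count is at issue; moreover the construction of \Fref{thm:bestJLT} is explicit, so writing down $S$ itself costs $\OO(sD)$ and is negligible against the bounds below.

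For the formation phase, the sketched SDP is determined by the $m+1$ matrices $SAS^T,SB_1S^T,\ldots,SB_mS^T$, and I would compute each product $SBS^T$ as a pair of sparse matrix-matrix products $S(BS^T)$ with the intermediate matrix materialized. Forming $C:=BS^T$ multiplies the matrix $B$, with $\mathrm{nnz}(B)$ nonzero entries, by $S^T$, which has $s$ nonzeros per row, so it costs $\OO(s\,\mathrm{nnz}(B))$ arithmetic operations; forming $SC$ left-multiplies $C\in\M_{D,d}$, whose number of nonzero entries is at most $Dd$, by $S$, which has $s$ nonzeros per column, so it costs $\OO(s\,Dd)$. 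Hence each product costs $\OO(s\max\{\mathrm{nnz}(B),Dd\})$ operations, which is precisely the per-matrix bound recorded before the statement. Summing over the $m+1$ matrices and writing $z=\max\{\mathrm{nnz}(A),\mathrm{nnz}(B_1),\ldots,\mathrm{nnz}(B_m)\}$, the formation phase costs $\OO(s\,m\max\{z,Dd\})$ operations.

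For the solving phase, the sketched SDP is itself a sketchable SDP in the sense of \Fref{defi:sketchablesdp}, having $m$ constraints and dimension $d$, so by the definition of $\mathrm{SDP}(\cdot,\cdot,\cdot)$ it can be solved to accuracy $\zeta$ in $\mathrm{SDP}(m,d,\zeta)$ operations. Substituting $d=\OO(\epsilon^{-2}\log(k\delta^{-1}))$ and $s=\OO(\epsilon^{-1}\log(k\delta^{-1}))$ into the two bounds and adding them yields a total of
\begin{equation*}
\OO\!\left(\max\{z,D\epsilon^{-2}\log(k\delta^{-1})\}\,\epsilon^{-1}m\log(k\delta^{-1})+\mathrm{SDP}\!\left(m,\epsilon^{-2}\log(k\delta^{-1}),\zeta\right)\right)
\end{equation*}
operations, as claimed. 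The only point that requires any care is the sparse matrix arithmetic: one must compute $SBS^T$ as a sequence of two matrix products rather than in one shot, and must use both the $s$-sparsity of the columns of $S$ and the crude bound $Dd$ on the size of the intermediate $D\times d$ matrix; the remainder is a routine substitution of the parameters from \Fref{thm:bestJLT} together with the correctness statement of \Fref{thm:uppersdpbysketch}, and introduces no new probabilistic content.
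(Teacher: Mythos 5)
Your proposal is correct and follows essentially the same route as the paper, which proves this proposition via the discussion immediately preceding it: use the sparse JLT of \Fref{thm:bestJLT}, bound each product $SBS^T$ by $\OO(s\max\{\mathrm{nnz}(B),Dd\})$ operations, sum over the $m+1$ constraint matrices, and add the cost $\mathrm{SDP}(m,d,\zeta)$ of solving the resulting smaller problem. Your write-up is in fact more explicit than the paper's (the two-stage sparse multiplication and the $sDd$ bound on the second product are only implicit there), but there is no substantive difference in the argument.
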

Typically, the costs of forming the sketched matrices $SB_iS^T$ dominates the overall complexity. To compare the above result to other methods for solving SDPs, let us fix $\epsilon$, $\delta$ and $\zeta$. Then, the ellipsoid method \cite[Chapter 3]{Groetschel1988} needs  $\OO(\max\Set{m,D^2}D^6)$ operations to solve the sketchable SDP, whereas using interior point methods we need $\OO(\max\Set{m^3, D^2 m^2, m D^\omega} D^{0.5} \log(D))$ operations \cite[Chapter 5]{Klerk2002}. Here, $\omega $ is the exponent of matrix multiplication. Compared to that, forming the sketched problem and then solving it requires $\OO(m D^2 \log(mD))$ operations.

Another advantage is that using our methods, we need store much smaller matrices.
\begin{prop}
Let $A, B_1, \ldots, B_m \in \MM_D^\sym$, $\gamma_1, \ldots, \gamma_m\in\setR$ be a sketchable SDP. Then we need only store $\OO(m\epsilon^{-4}\log(k/\delta)^2)$ entries for the sketched problem, where 
$k$ is defined as in \Fref{thm:uppersdpbysketch}. 
\end{prop}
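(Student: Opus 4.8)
The plan is to count the nonzero entries of the sketched data $\{SAS^T, SB_1S^T,\ldots,SB_mS^T\}$ directly. Each sketched matrix lives in $\MM_d$, so naively it has $d^2$ entries; the point is to plug in the values of the sketching dimension $d$ and the per-column sparsity $s$ guaranteed by \Fref{thm:bestJLT} and the choice of $\epsilon,\delta,k$ forced by \Fref{thm:uppersdpbysketch}. First I would recall that \Fref{thm:uppersdpbysketch} requires $S$ to be an $(\epsilon,\delta,k)$-JLT; choosing the sparse JLT of \Fref{thm:bestJLT} gives $d=\OO(\epsilon^{-2}\log(k\delta^{-1}))$, so that each $SB_iS^T$ has at most $d^2=\OO(\epsilon^{-4}\log(k\delta^{-1})^2)$ entries. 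Since there are $m+1$ such matrices (one for $A$, one for each $B_i$), together with the $m$ scalars $\gamma_i$, the total storage is $\OO(m\,\epsilon^{-4}\log(k\delta^{-1})^2)$, which is the claimed bound (the $\gamma_i$ and the $O(1)$ from the $A$-term being absorbed).

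A sharper version of the same argument exploits sparsity: if the columns of $S$ have only $s=\OO(\epsilon^{-1}\log(k\delta^{-1}))$ nonzero entries, then $SB_iS^T$ has at most $s^2\,\mathrm{nnz}(B_i)$ nonzero entries (and also at most $d^2$), so one could alternatively state the bound as $\OO\!\big(\sum_i \min\{d^2,\ s^2\,\mathrm{nnz}(B_i)\}\big)$. For the worst case $\mathrm{nnz}(B_i)=D^2$ this reduces to the $d^2$ bound above, so the stated proposition is the clean dimension-free consequence. I would therefore present the proof in two sentences: invoke \Fref{thm:bestJLT} to get $d=\OO(\epsilon^{-2}\log(k/\delta))$, then note the sketched problem consists of $m+1$ matrices in $\MM_d$ plus $m$ real numbers, giving $\OO(m d^2)=\OO(m\epsilon^{-4}\log(k/\delta)^2)$ entries.

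\begin{proof}
By \Fref{thm:bestJLT}, we may choose the sketching matrix $S\in\M_{d,D}$ to be an $(\epsilon,\delta,k)$-JLT with sketching dimension $d=\OO(\epsilon^{-2}\log(k\delta^{-1}))$, where $k$ is as in \Fref{thm:uppersdpbysketch}. The sketched SDP is specified by the matrices $SAS^T, SB_1S^T,\ldots,SB_mS^T\in\M_d$ together with the scalars $\gamma_1,\ldots,\gamma_m$ (and the fixed parameter $\mu=3\epsilon\eta$). Each of the $m+1$ sketched matrices has at most $d^2$ entries, so the total number of entries needed to store the sketched problem is at most
\begin{equation*}
(m+1)d^2 + m = \OO\!\left(m\,\epsilon^{-4}\log(k\delta^{-1})^2\right),
\end{equation*}
as claimed.
\end{proof}

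The only subtlety — and it is a presentational one rather than a genuine obstacle — is making sure the reader understands that $k$ here is \emph{not} an independent free parameter but is pinned down by the rank constraint $k\geq\rank{X^*}+\rank{A}+\sum_i\rank{B_i}$ of \Fref{thm:uppersdpbysketch}; under the standing assumption of \Fref{sec:complexity} that the relevant Schatten $1$-norms are $\OO(1)$ one still has $k=\OO(mD)$ in the worst case, so $\log(k/\delta)=\OO(\log(mD/\delta))$, which is what makes the bound genuinely smaller than the $\OO(mD^2)$ cost of storing the original problem. I would add one clause to that effect so the logarithmic factor is not misread as hiding a polynomial dependence.
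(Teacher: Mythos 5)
Your proof is correct and matches the paper's (implicit) reasoning exactly: the paper states this proposition without a formal proof, but the surrounding discussion in \Fref{sec:complexity} relies on precisely your counting argument --- $m+1$ matrices in $\M_d$ with $d=\OO(\epsilon^{-2}\log(k\delta^{-1}))$ from \Fref{thm:bestJLT}, giving $\OO(md^2)=\OO(m\epsilon^{-4}\log(k/\delta)^2)$ stored entries. Your closing remark that $k=\OO(mD)$ so the logarithm hides no polynomial dependence is a worthwhile clarification the paper leaves implicit.
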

Numerical experiments with random instances of SDPs and LMIs that satisfy our requirements indicate that our methods 
may decrease the runtime of SDPs by one order of magnitude. Moreover, they allow us to solve problems in dimensions that are larger by
one order of magnitude.
\section*{Acknowledgements}
We would like to thank Ion Nechita for helpful discussions. 
A.B. acknowledges support from the ISAM Graduate Center at Technical University of Munich.
D.S.F. acknowledges support
from the graduate program TopMath of the Elite Network of Bavaria, the TopMath
Graduate Center of TUM Graduate School at Technical University of Munich. D.S.F.
is supported by the Technical University of Munich – Institute for Advanced Study,
funded by the German Excellence Initiative and the European Union Seventh Framework
Programme under grant agreement no. 291763. 

\bibliographystyle{alpha}
\bibliography{lit}

\end{document}